\newtheorem{theorem}{Theorem}
\newtheorem{remark}[theorem]{Remark}
\newtheorem{lemma}[theorem]{Lemma}
\newtheorem{proposition}[theorem]{Proposition}
\DeclareMathOperator*{\divergenz}{div}              %
\newcommand{\Lp}[1]{L^{#1}(\Omega)}
\newcommand{\Wp}[1]{W^{1,#1}(\Omega)}
\newcommand{\Wpzero}[1]{W^{1,#1}_0(\Omega)}
\newcommand{\eps}{\varepsilon}
\newcommand{\into}{\int_{\Omega}}
\newcommand{\close}{\overline{\Omega}}
\newcommand{\cprime}{$'$}
\renewcommand{\l}{\left}
\renewcommand{\r}{\right}
\numberwithin{theorem}{section}
\numberwithin{equation}{section}
\DeclareMathOperator*{\esssup}{ess\,sup}
\DeclareMathOperator*{\loc}{loc}
\newcommand*\diff{\mathop{}\!\mathrm{d}}
\newcommand{\norm}[1]{\left \Vert #1\right\Vert}
\newcommand{\R}{{\mathbb R}}
\newcommand{\N}{{\mathbb N}}
\newcommand{\Assg}[1]{\textup{(g)}}
\title[Boundedness and H\"older continuity of weak solutions]
{The boundedness and H\"older continuity of weak solutions to elliptic equations involving variable exponents and critical growth}
\author[K. Ho]{Ky Ho}
\address[K. Ho]{Institute of Applied Mathematics, University of Economics Ho Chi Minh City, 59C, Nguyen Dinh Chieu Street, District 3, Ho Chi Minh City, Viet Nam}
\email{kyhn@ueh.edu.vn}
\author[Y.-H. Kim]{Yun-Ho Kim}
\address[Y.-H. Kim]{Department of Mathematics Education, Sangmyung University, Seoul, 03016, Korea}
\email{kyh1213@smu.ac.kr}
\author[P. Winkert]{Patrick Winkert}
\address[P. Winkert]{Technische Universit\"{a}t Berlin, Institut f\"{u}r Mathematik, Stra\ss e des 17.\,Juni 136, 10623 Berlin, Germany}
\email{winkert@math.tu-berlin.de}
\author[C. Zhang]{Chao Zhang}
\address[C. Zhang]{School of Mathematics and Institute for Advanced Study in Mathematics, Harbin Institute of Technology, Harbin 150001, PR China}
\email{czhangmath@hit.edu.cn}
\subjclass{35B45, 35B65, 35D30, 35J60, 46E35}
\keywords{A-priori bounds; De Giorgi iteration; H\"older continuity;  Localization method; $p(\cdot)$-Laplacian; Variable exponent Lebesgue and Sobolev spaces.}
\begin{document}

\begin{abstract}
	In this paper we prove the boundedness and H\"older continuity of quasilinear elliptic problems involving variable exponents for a homogeneous Dirichlet and a nonhomogeneous Neumann boundary condition, respectively. The novelty of our work is the fact that we allow critical growth even on the boundary and so we close the gap in the papers of Fan-Zhao [Nonlinear Anal. {\bf 36} (1999), no. 3, 295--318.] and Winkert-Zacher [Discrete Contin. Dyn. Syst. Ser. S {\bf 5} (2012), no. 4, 865--878.] in which the critical cases are excluded. Our approach is based on a modified version of De Giorgi's iteration technique along with the localization method. As a consequence of our results, the $C^{1,\alpha}$-regularity follows immediately.
\end{abstract}
	
\maketitle

\section{Introduction}
Let $\Omega\subset \R^N$ ($N\geq 2$) be a bounded domain with Lipschitz boundary $\Gamma:=\partial\Omega$ and let $p \in C(\close)$ be such that $1<p^-:=\min_{\close}p$. In this paper, we investigate the boundedness and H\"older continuity of weak solutions to quasilinear elliptic problems defined in variable exponent Sobolev spaces involving critical growth of the general form
\begin{equation}\label{D}
	\begin{aligned}
		-\divergenz \mathcal{A}(x,u,\nabla u)& =\mathcal{B}(x,u,\nabla u)\quad && \text{in } \Omega,\\
		u & = 0 &&\text{on } \Gamma,
	\end{aligned}
\end{equation}
and
\begin{equation}\label{N}
	\begin{aligned}
		-\divergenz \mathcal{A}(x,u,\nabla u)& =\mathcal{B}(x,u,\nabla u)\quad && \text{in } \Omega,\\
		\mathcal{A}(x,u,\nabla u)\cdot \nu & = \mathcal{C}(x,u) &&\text{on } \Gamma,
	\end{aligned}
\end{equation}
where $\nu(x)$ denotes the outer unit normal of $\Omega$ at $x\in \Gamma$ and the functions $\mathcal{A}\colon\Omega\times\R\times\R^N\to \R^N$, $\mathcal{B}\colon\Omega \times \R\times \R^N\to \R$ as well as $\mathcal{C}\colon \Gamma \times \R \to\R$ are Carath\'eodory functions
which satisfy suitable $p(\cdot)$-structure conditions, see H(D), H(N) and H(A) in Sections \ref{section_3}, \ref{section_4} and \ref{section_5}, respectively, for the precise assumptions.

An important special case of \eqref{D} and \eqref{N} which is included in our setting is given by
\begin{align*}
	-\Delta_{p(\cdot)}u&=\mathcal{B}(x,u,\nabla u) \quad\text{in }\Omega,\qquad u=0 \quad \text{on }\Gamma,\\
	-\Delta_{p(\cdot)}u&=\mathcal{B}(x,u,\nabla u)\quad\text{in }\Omega,\qquad |\nabla u|^{p(x)-2} \nabla u \cdot \nu = \mathcal{C}(x,u) \quad \text{on }\Gamma,
\end{align*}
where $\Delta_{p(\cdot)}$ denotes the so-called $p(\cdot)$-Laplace differential operator which is given by
\begin{align}\label{p(x)-Laplacian}
	\Delta_{p(\cdot)}u=\divergenz(|\nabla u|^{p(\cdot)-2}\nabla u) \quad \text{for } u \in \Wpzero{p(\cdot)} \text{ or }\Wp{p(\cdot)}.
\end{align}
Note that $\Delta_{p(\cdot)}$ reduces to the well-known $p$-Laplacian $\Delta_p$ when $p(x) \equiv p$ is a constant.

Nonlinear equations of the form considered in \eqref{D} and \eqref{N} involving variable exponents in the structure conditions are usually called equations with nonstandard growth. Such equations are of great interest and appear in the mathematical modelling of certain physical phenomena, for example in fluid dynamics (flows of electro-rheological fluids or fluids with temperature-dependent viscosity), in nonlinear viscoelasticity, in image processing and in processes of filtration through porous media, see for example, Acerbi-Mingione-Seregin \cite{Acerbi-Mingione-Seregin-2004}, Antontsev-D{\'{\i}}az-Shmarev \cite{Antontsev-Diaz-Shmarev-2002}, Antontsev-Rodrigues \cite{Antontsev-Rodrigues-2006}, Chen-Levine-Rao \cite{Chen-Levine-Rao-2006}, Diening \cite{Diening-2002}, Rajagopal-R$\mathring{\text{u}}$\v{z}i\v{c}ka \cite{Rajagopal-Ruzicka-2001}, R$\mathring{\text{u}}$\v{z}i\v{c}ka \cite{Ruzicka-2000} and Zhikov \cite{Zhikov-1997, Zhikov-1997-b} and the references therein.

The $p(\cdot)$-Laplace differential operator given in \eqref{p(x)-Laplacian} is related to the energy functional
\begin{align}\label{energy-functional}
	u \mapsto \into |\nabla u|^{p(x)}\diff x,
\end{align}
which was first used to illustrate models for strongly anisotropic materials, see Zhikov \cite{Zhikov-1986, Zhikov-1995,Zhikov-1997-b}, that is,  in a material made of different components, the exponent $p(\cdot)$ dictates the geometry of a composite that changes its hardening exponent according to the point. We refer to the paper of Acerbi-Mingione \cite{Acerbi-Mingione-2005} who developed gradient estimates and qualitative properties of minimizers of energies that have variable exponents as in \eqref{energy-functional}.

Global a priori bounds for classes of elliptic problems with different boundary condition are important results in the theory of differential equations and often guarantee further knowledge about the smoothness of solutions. We mention the famous works of Lieberman \cite{Lieberman-1988, Lieberman-1991} stating the $C^{1,\alpha}$-regularity up to the boundary of solutions to partial differential equations of second order both with nonhomogeneous Dirichlet- and nonlinear Neumann boundary condition. A crucial condition for the application of such results is the boundedness of the solutions. The corresponding $C^{1,\alpha}$-result for equations with $p(\cdot)$-structure has been published by Fan \cite{Fan-2007}.
In order to obtain such results, most of the papers make use of the so-called De Giorgi-Nash-Moser theory which provides iterative methods based on truncation techniques to get $L^\infty$-bounds for certain equations, see the works of De Giorgi \cite{De-Giorgi-1957}, Nash \cite{Nash-1958} and Moser \cite{Moser-1960}. The techniques developed in these papers provided powerful tools to prove local and global boundedness, the Harnack and the weak Harnack inequality and the H\"older continuity of weak solutions. For more information we refer to the monographs of Gilbarg-Trudinger \cite{Gilbarg-Trudinger-1983}, Lady{\v{z}}enskaja-Ural{\cprime}ceva \cite{Ladyzenskaja-Uralceva-1968},  Lady{\v{z}}enskaja-Solonnikov-Ural{\cprime}ceva \cite{Ladyzenskaja-Solonnikov-Uralceva-1968} and Lieberman \cite{Lieberman-1996}.

In the present paper we are going to prove the boundedness and H\"older continuity of weak solutions of the problems \eqref{D} and \eqref{N}. The novelty in our work is the fact that we allow critical growth on the perturbations $\mathcal{B}$ and $\mathcal{C}$, so in the domain and on the boundary. With our work we close the gap of the papers of Fan-Zhao \cite{Fan-Zhao-1999} for the Dirichlet and Winkert-Zacher \cite{Winkert-Zacher-2012,Winkert-Zacher-2015} for the Neumann problem for the subcritical case. Indeed, in these papers the critical cases are excluded and cannot be realized with their treatment. However, we will adopt some ideas and found a way to overcome the problem with the critical exponents by a modified version of De Giorgi's iteration technique along with an appropriate  application of the localization method.

Let us comment on some relevant known results on quasilinear elliptic problems with $p$- and $p(\cdot)$-structure. Concerning boundedness results for elliptic problems driven by the $p$- or ($p,q$)-Laplacian we refer to the papers of Garc\'{\i}a Azorero-Peral Alonso-Manfredi \cite{Garcia-Azorero-Peral-Alonso-Manfredi-2000} (for homogeneous Dirichlet condition), Motreanu-Motreanu-Papageorgiou \cite{Motreanu-Motreanu-Papageorgiou-2009} (for homogeneous Neumann condition) and Winkert \cite{Winkert-2010} (for nonhomogeneous Neumann condition). All these works are working with a De Giorgi-Nash-Moser iteration technique but only for the subcritical case. In the critical constant exponent case for Neumann problems we refer to the papers of Papageorgiou-R\u adulescu \cite{Papageorgiou-Radulescu-2016} (critical growth in the domain) and Marino-Winkert \cite{Marino-Winkert-2019} (critical growth even on the boundary). For elliptic problems with $p(\cdot)$-structure, the first work has been done by Fan-Zhao \cite{Fan-Zhao-1999} who proved the boundedness and H\"older continuity for weak solutions of problem \eqref{D} in the subcritical case. The corresponding cases with a homogeneous and nonhomogeneous Neumann boundary condition are proved in Gasi{\'n}ski-Papageorgiou \cite{Gasinski-Papageorgiou-2011} and Winkert-Zacher \cite{Winkert-Zacher-2012, Winkert-Zacher-2015}, respectively. All these works treated only problems with subcritical growth. A priori bounds for degenerate elliptic equations with variable growth can be found in Ho-Sim \cite{Ho-Sim-2014,Ho-Sim-2015} based on the methods of \cite{Fan-Zhao-1999} and \cite{Winkert-Zacher-2012}. Boundedness results for weighted elliptic equations with variable exponents and convection term has been developed in Ho-Sim \cite{Ho-Sim-2017} and a priori bounds for the fractional
$p(\cdot)$-Laplacian are recently studied by Ho-Kim \cite{Ho-Kim-2019}. To the best of our knowledge, there exists no work for boundedness results and H\"older continuity for problems \eqref{D} and \eqref{N} involving functions that have critical growth.

Finally, we mention papers which are very close to our topic dealing with certain types of a priori bounds for equations with $p$- or $p(\cdot)$-structure. We refer to Ding-Zhang-Zhou \cite{Ding-Zhang-Zhou-2020b, Ding-Zhang-Zhou-2020}, Garc\'{\i}a Azorero-Peral Alonso \cite{Garcia-Azorero-Peral-Alonso-1994}, Guedda-V\'eron \cite{Guedda-Veron-1989}, Marino-Winkert \cite{Marino-Winkert-2020, Marino-Winkert-2020b}, Pucci-Servadei \cite{Pucci-Servadei-2008}, Wang \cite{Wang-1991}, Winkert \cite{Winkert-2010b, Winkert-2014}, Winkert-Zacher \cite{Winkert-Zacher-2016}, Zhang-Zhou \cite{Zhang-Zhou-2012}, Zhang-Zhou-Xue \cite{Zhang-Zhou-Xue-2014} and the references therein.

The paper is organized as follows. In Section \ref{section_2} we recall the basic properties of variable Lebesgue and Sobolev spaces and state the main tools which we will need in later sections. The boundedness of weak solutions of problem \eqref{D} is then presented in Section \ref{section_3}, see Theorem \ref{Theo.D}. In Section \ref{section_4} we prove the boundedness of weak solutions for problem \eqref{N}, see Theorem \ref{Theo.N}, and Section \ref{section_5} is concerned with the H\"older continuity of weak solutions for both, \eqref{D} and \eqref{N}. These results are stated in Theorems \ref{C1,alpha.D} and \ref{C1,alpha.N}, respectively. Finally, we mention the $C^{1,\alpha}$-regularity of weak solutions due to Fan \cite{Fan-2007}.

\section{Preliminaries and Notations}\label{section_2}

In this section we recall some basic facts about Lebesgue and Sobolev spaces with variable exponents, see, for example Fan-Zhao \cite{Fan-Zhao-2001} and Kov{\'a}{\v{c}}ik-R{\'a}kosn{\'{\i}}k \cite{Kovacik-Rakosnik-1991}. To this end, let $\Omega$ be a bounded domain in $\mathbb{R}^N$ with Lipschitz boundary $\Gamma$. For $p\in C_+(\overline\Omega)$, where $C_+(\close)$ is given by
\begin{align*}
	C_+(\close)=\{h \in C(\close) \, : \, 1<h(x) \text{ for all }x\in \close\},
\end{align*}
we denote
\begin{align*}
	p^{-}:=\inf_{x\in \close} p(x) \quad \text{and} \quad  p^{+}:=\sup_{x\in \close} p(x).
\end{align*}
Moreover, let $M(\Omega)$ be the space of all measurable functions $u\colon \Omega\to\R$. Then, for a given $p \in C_+(\close)$, the variable exponent Lebesgue space $\Lp{p(\cdot)}$ is defined as
\begin{align*}
	\Lp{p(\cdot)}=\l\{u \in M(\Omega)\,:\, \into |u|^{p(x)}\diff x<\infty \r\}
\end{align*}
equipped with the Luxemburg norm given by
\begin{align*}
	\|u\|_{p(\cdot)} =\inf \l \{\lambda>0 \, : \, \into \l|\frac{u(x)}{\lambda}\r|^{p(x)}\diff x \leq 1 \r\}.
\end{align*}

The following propositions can be found in Diening-Harjulehto-H\"{a}st\"{o}-R$\mathring{\text{u}}$\v{z}i\v{c}ka \cite{Diening-Harjulehto-Hasto-Ruzicka-2011}.

\begin{proposition}
	Let $s,r\in C_+(\close)$ be such that $s(x)\le r(x)$ for all $x\in\close.$ Then, $L^{r(\cdot)}(\Omega)\subseteq  L^{s(\cdot)}(\Omega)$ and
	\begin{align*}
		\norm{u}_{s(\cdot)}\le
		2(1+|\Omega|)\norm{u}_{r(\cdot)}\quad \text{for all }u\in  L^{r(\cdot)}(\Omega).
	\end{align*}
	
\end{proposition}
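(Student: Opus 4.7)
The plan is to reduce both the inclusion $L^{r(\cdot)}(\Omega)\subseteq L^{s(\cdot)}(\Omega)$ and the norm bound to a single modular inequality, using the unit-ball property of the Luxemburg norm (i.e.\ $\|v\|_{p(\cdot)}\le 1$ iff $\into|v|^{p(x)}\,\diff x\le 1$). Fix $u\in L^{r(\cdot)}(\Omega)$ with $u\not\equiv 0$, set $\lambda:=\|u\|_{r(\cdot)}$, and let $C:=2(1+|\Omega|)$. Since $\into|u/\lambda|^{r(x)}\,\diff x\le 1$ by definition of $\lambda$, it suffices to prove
\[
\into\left|\frac{u(x)}{C\lambda}\right|^{s(x)}\diff x\le 1,
\]
for the unit-ball property then yields $\|u\|_{s(\cdot)}\le C\lambda$, and in particular $u\in L^{s(\cdot)}(\Omega)$.

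To control this modular I would decompose $\Omega$ into the three disjoint sets $\Omega_1:=\{|u|\le\lambda\}$, $\Omega_2:=\{\lambda<|u|\le C\lambda\}$, $\Omega_3:=\{|u|>C\lambda\}$. On $\Omega_1$ the base satisfies $|u/(C\lambda)|\le 1/C\le 1$, so together with $s(x)\ge s^{-}\ge 1$ this gives $|u/(C\lambda)|^{s(x)}\le 1/C$, and the contribution is at most $|\Omega|/C$. On $\Omega_2$ one has $|u/\lambda|>1$, and the hypothesis $s\le r$ applied where the base exceeds $1$ yields $|u/\lambda|^{s(x)}\le|u/\lambda|^{r(x)}$; pulling out $C^{-s(x)}\le C^{-1}$ bounds this contribution by $C^{-1}\into|u/\lambda|^{r(x)}\,\diff x\le C^{-1}$. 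On $\Omega_3$ the base $|u/(C\lambda)|$ itself exceeds $1$, so $s\le r$ gives $|u/(C\lambda)|^{s(x)}\le|u/(C\lambda)|^{r(x)}\le C^{-1}|u/\lambda|^{r(x)}$, contributing at most $C^{-1}$.

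Summing the three estimates produces $|\Omega|/C+2/C=(|\Omega|+2)/C\le 1$, since $|\Omega|+2\le 2(1+|\Omega|)$, which closes the argument. The main technical point is the introduction of the intermediate set $\Omega_2$: a naive two-piece split $\{|u|\le C\lambda\}$ against $\{|u|>C\lambda\}$ leaves a term of order $|\Omega|$ on the lower piece that cannot be absorbed by the modular of $u/\lambda$; it is precisely by trading $|u/\lambda|^{s(x)}$ for $|u/\lambda|^{r(x)}$ on $\Omega_2$ (using $s\le r$ exactly where the base exceeds $1$) that one recovers the unit-ball bound, and the choice $C=2(1+|\Omega|)$ is tight enough for the three pieces $|\Omega|/C$, $C^{-1}$, $C^{-1}$ to balance.
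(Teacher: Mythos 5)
The paper cites this proposition from Diening--Harjulehto--H\"ast\"o--R\r{u}\v{z}i\v{c}ka without supplying a proof, so there is no argument in the paper to compare against; your self-contained modular argument is correct. Setting $\lambda=\|u\|_{r(\cdot)}$ and $C=2(1+|\Omega|)$, Proposition~\ref{norm-modular} gives $\into|u/\lambda|^{r(x)}\diff x\le 1$, your three-way split together with the monotonicity $t^{s(x)}\le t^{r(x)}$ for $t\ge 1$ yields $\into|u/(C\lambda)|^{s(x)}\diff x\le(|\Omega|+2)/C\le 1$, and the unit-ball property then delivers $\|u\|_{s(\cdot)}\le C\|u\|_{r(\cdot)}$ together with the inclusion. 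One small remark: the intermediate set $\Omega_2$ is in fact redundant. The computation you perform there --- pull out $C^{-s(x)}\le C^{-1}$ first, then use $s\le r$ where $|u/\lambda|>1$ --- applies verbatim on all of $\{|u|>\lambda\}=\Omega_2\cup\Omega_3$, so a two-piece split at $|u|=\lambda$ already suffices and gives the slightly sharper total $(|\Omega|+1)/C$. The split you rightly reject as inadequate is the one at $|u|=C\lambda$; the natural two-piece alternative is at $|u|=\lambda$. This is only a simplification, not a flaw in your proof.
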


\begin{proposition}
	The space $L^{p(\cdot) }(\Omega)$ is a separable and uniformly convex Banach space and its conjugate space is $L^{p'(\cdot)}(\Omega )$, where  $1/p(x)+1/p'(x)=1$ for all $x \in \close$. For any $u\in L^{p(\cdot)}(\Omega)$ and for any $v\in L^{p'(\cdot)}(\Omega)$, we have
	\begin{align*}
		\left|\int_\Omega uv\diff x\right|\leq\ 2 \|u\|_{p(\cdot)}\|v\|_{p'(\cdot)}.
	\end{align*}
\end{proposition}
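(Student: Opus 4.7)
The approach is to establish the four claims in an order in which each feeds the next: first the H\"older-type integral inequality, then completeness, then separability, and finally uniform convexity together with the identification of the dual. I would begin with the H\"older-type inequality because it is needed both for the dual identification and to simplify estimates in the completeness argument. The starting point is the pointwise Young inequality
\[
\abs{u(x)v(x)} \le \frac{1}{p(x)}\abs{u(x)}^{p(x)} + \frac{1}{p'(x)}\abs{v(x)}^{p'(x)};
\]
dividing by $\norm{u}_{p(\cdot)}\norm{v}_{p'(\cdot)}$, integrating over $\Omega$, and invoking the defining property $\into (\abs{u}/\norm{u}_{p(\cdot)})^{p(x)}\diff x \le 1$ of the Luxemburg norm yields the bound with constant $\frac{1}{p^-} + \frac{1}{(p^-)'} \le 2$.

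Next I would verify that $\norm{\cdot}_{p(\cdot)}$ is a norm: positivity and homogeneity are immediate from the definition, while the triangle inequality follows from the convexity of $t \mapsto t^{p(x)}$ applied inside the infimum. Completeness proceeds via a standard Cauchy-subsequence argument: extract $(u_{n_k})$ with $\norm{u_{n_{k+1}} - u_{n_k}}_{p(\cdot)} \le 2^{-k}$, use the modular-norm comparison (comparing $\rho(u) := \into \abs{u}^{p(x)}\diff x$ with $\norm{u}_{p(\cdot)}^{p^\pm}$ depending on whether the norm exceeds $1$) to upgrade to pointwise convergence via monotone convergence, and then confirm the limit lies in $\Lp{p(\cdot)}$. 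Separability then follows by approximating any $u \in \Lp{p(\cdot)}$ in modular, hence in norm, by simple functions with rational values on finite unions of dyadic cubes with rational vertices inside $\Omega$; the equivalence of modular and norm convergence rests crucially on $1 < p^- \le p^+ < \infty$.

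The main obstacle will be uniform convexity together with the duality $(\Lp{p(\cdot)})^* \cong L^{p'(\cdot)}(\Omega)$. For uniform convexity I would prove a variable-exponent Clarkson-type inequality at the modular level: for every $\varepsilon > 0$ there exists $\delta = \delta(\varepsilon, p^-, p^+) > 0$ with $\rho((u+v)/2) \le 1 - \delta$ whenever $\rho(u) = \rho(v) = 1$ and $\rho((u-v)/2) \ge \varepsilon$. The input is pointwise uniform convexity of $t \mapsto t^{p(x)}$, quantitatively uniform in $x$ because $p^- > 1$ and $p^+ < \infty$; transferring the modular estimate to the Luxemburg norm then yields uniform convexity of the space. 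For the dual identification, the H\"older-type inequality already gives a continuous injection $L^{p'(\cdot)}(\Omega) \hookrightarrow (\Lp{p(\cdot)})^*$ via $v \mapsto (u \mapsto \into uv \diff x)$. Surjectivity is the subtle direction: given $\Phi$ in the dual, set $\mu(E) := \Phi(\chi_E)$, verify absolute continuity via $\norm{\chi_{E_n}}_{p(\cdot)} \to 0$ whenever $\abs{E_n} \to 0$, apply the Radon--Nikodym theorem to produce a density $v$, and show by truncation arguments that $v \in L^{p'(\cdot)}(\Omega)$ with the expected norm equality. The recurrent technical subtlety, absent in the constant-exponent case, is the mismatch between the modular and the norm across the threshold $\norm{u}_{p(\cdot)} = 1$, which must be tracked carefully in each step above.
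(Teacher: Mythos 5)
The paper does not give its own proof of this proposition; it is stated with a citation to Diening--Harjulehto--H\"ast\"o--R\r{u}\v{z}i\v{c}ka as a standard textbook result, so there is no in-paper argument to compare against. Your sketch is the standard route, and the overall plan (Young's inequality for the H\"older estimate, Cauchy subsequences plus modular--norm comparison for completeness, simple functions for separability, Clarkson-type modular convexity for uniform convexity, Radon--Nikodym with truncation for the dual) is sound as far as it goes, though the last two items remain only sketched. One concrete slip in the H\"older step is worth fixing: after applying Young's inequality to $u/\norm{u}_{p(\cdot)}$ and $v/\norm{v}_{p'(\cdot)}$, integrating, and bounding $1/p(x)\le 1/p^-$ and $1/p'(x)\le 1/(p')^-$, the constant that comes out is
\[
\frac{1}{p^-}+\frac{1}{(p')^-},
\]
and since conjugation $t\mapsto t/(t-1)$ is order-reversing one has $(p')^-=(p^+)'$, so this equals $\frac{1}{p^-}+1-\frac{1}{p^+}$. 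You wrote $\frac{1}{p^-}+\frac{1}{(p^-)'}$, which by definition of the conjugate equals exactly $1$; that is \emph{not} the quantity produced by the estimate (it is $1/(p')^+$, not $1/(p')^-$, that equals $1/(p^-)'$). The conclusion is unaffected because both expressions are $\le 2$, but the identity you invoked is the wrong one and would suggest a sharper constant $1$ that this argument does not deliver. Beyond that small repair, the harder parts of the proposition (uniform convexity and surjectivity in the dual identification) are precisely where the real work lies in the cited reference; your proposal correctly identifies them as the main obstacles and names the right tools, but it does not actually carry out the modular-to-norm transfer or the truncation argument, so it should be understood as a plan rather than a proof.
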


The corresponding modular $\rho \colon L^{p(\cdot) }(\Omega)  \to \R$ is given by
\begin{align*}
	\rho (u) =\int_{\Omega }| u| ^{p(x) }\diff x\quad\text{for all }u\in L^{p(\cdot) }(\Omega)  .
\end{align*}
The following proposition shows the relation between the norm $\|\cdot\|_{p(\cdot)}$ and the modular $\rho$.
\begin{proposition}\label{norm-modular}
	For all $u\in L^{p(\cdot) }(\Omega) $ we have the following assertions:
	\begin{enumerate}
		\item[(i)]
			$\|u\|_{p(\cdot)}<1$ (resp. $=1$, $>1$)
			if and only if \  $\rho (u) <1$ (resp. $=1$, $>1$);
		\item[(ii)]
			if \  $\|u\|_{p(\cdot)}>1,$ then  $\|u\|^{p^{-}}_{p(\cdot)}\leq \rho (u) \leq \|u\|_{p(\cdot)}^{p^{+}}$;
		\item[(iii)]
			if \ $\|u\|_{p(\cdot)}<1,$ then $\|u\|_{p(\cdot)}^{p^{+}}\leq \rho(u) \leq \|u\|_{p(\cdot)}^{p^{-}}$.
	\end{enumerate}
	Consequently, the following inequality holds true
	\begin{align*}
		\|u\|_{p(\cdot)}^{p^{-}}-1\leq \rho (u) \leq \|u\|_{p(\cdot)}^{p^{+}}+1 \quad \text{for all }u\in L^{p(\cdot)}(\Omega ).
	\end{align*}
\end{proposition}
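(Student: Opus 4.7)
My plan is to reduce all three parts to the single identity
\[
\rho\!\left(\frac{u}{\|u\|_{p(\cdot)}}\right)=1\qquad\text{for every nonzero }u\in L^{p(\cdot)}(\Omega),
\]
from which (i)--(iii) and the concluding estimate will drop out by elementary manipulations of the integrand. The case $u\equiv 0$ is trivial, so throughout I would assume $u\not\equiv 0$.

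The first step is to analyse the function $\Phi(\lambda):=\rho(u/\lambda)=\into \lambda^{-p(x)}|u(x)|^{p(x)}\diff x$ on $(0,\infty)$. By dominated convergence (with a local majorant $\lambda_0^{-p(x)}|u|^{p(x)}\in L^1(\Omega)$ near any fixed $\lambda_0>0$) the map $\Phi$ is continuous, and since $p^->1$ and $u$ is nonzero on a set of positive measure it is strictly decreasing, with $\lim_{\lambda\to 0^+}\Phi(\lambda)=+\infty$ and $\lim_{\lambda\to+\infty}\Phi(\lambda)=0$. Hence $\Phi\colon(0,\infty)\to(0,\infty)$ is a bijection, so the infimum in the Luxemburg definition $\|u\|_{p(\cdot)}=\inf\{\lambda>0:\Phi(\lambda)\le 1\}$ is attained at the unique point where $\Phi=1$, which gives the target identity.

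Given the identity, part (i) follows by comparing $\Phi(1)=\rho(u)$ with $\Phi(\|u\|_{p(\cdot)})=1$ using the strict monotonicity of $\Phi$. For (ii), I would set $\lambda:=\|u\|_{p(\cdot)}>1$ and expand
\[
1=\rho(u/\lambda)=\into \lambda^{-p(x)}|u(x)|^{p(x)}\diff x;
\]
the pointwise bounds $\lambda^{-p^+}\le \lambda^{-p(x)}\le \lambda^{-p^-}$ (valid because $\lambda>1$) yield $\lambda^{-p^+}\rho(u)\le 1\le\lambda^{-p^-}\rho(u)$, which rearranges to $\|u\|_{p(\cdot)}^{p^-}\le\rho(u)\le\|u\|_{p(\cdot)}^{p^+}$. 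Part (iii) is completely analogous with $\lambda<1$, which flips the pointwise bounds on $\lambda^{-p(x)}$ and hence the final inequalities. The concluding consolidated estimate then comes from a two-case split on whether $\|u\|_{p(\cdot)}>1$ or $\|u\|_{p(\cdot)}\le 1$, with the $\pm 1$ slack absorbing the latter case via $\rho(u)\le 1$ from (i). I do not expect a genuine obstacle: the only non-algebraic ingredient is the continuity of $\Phi$, which is a routine dominated-convergence argument, so the proposition is essentially a bookkeeping consequence of the definition of the Luxemburg norm.
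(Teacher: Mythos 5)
The paper does not include a proof of this proposition; it cites it as a standard fact from the literature (Diening--Harjulehto--H\"ast\"o--R\r{u}\v{z}i\v{c}ka), so there is no in-paper argument to compare against. Your proof is correct and is the standard one: establishing that $\Phi(\lambda)=\rho(u/\lambda)$ is a continuous, strictly decreasing bijection of $(0,\infty)$ onto itself (for $u\not\equiv 0$) pins down $\|u\|_{p(\cdot)}$ as the unique root of $\Phi=1$, and then (i)--(iii) fall out by strict monotonicity and the elementary pointwise bounds $\lambda^{-p^{\pm}}\le\lambda^{-p(x)}\le\lambda^{-p^{\mp}}$, with the final consolidated estimate following from a two-case split. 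All the analytic ingredients you invoke (finiteness of $\rho(u)$ for $u\in L^{p(\cdot)}(\Omega)$, dominated convergence with the local majorant $\max\{(\lambda_0/2)^{-p^-},(\lambda_0/2)^{-p^+}\}|u|^{p(x)}$, monotone convergence as $\lambda\downarrow 0$) are supplied or routine, so the argument closes cleanly.
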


The corresponding variable exponent Sobolev spaces can be defined in the same way using the variable exponent Lebesgue spaces. For $p \in C_+(\close)$ the variable exponent Sobolev space $\Wp{p(\cdot)}$ is defined by
\begin{align*}
	\Wp{p(\cdot)}=\l\{ u \in \Lp{p(\cdot)} \,:\, |\nabla u| \in \Lp{p(\cdot)}\r\}
\end{align*}
endowed with the norm
\begin{align*}
	\|u\|_{1,p(\cdot)}=\|u\|_{p(\cdot)}+\|\nabla u\|_{p(\cdot)} \quad\text{for all } u \in \Wp{p(\cdot)},
\end{align*}
where $\|\nabla u\|_{p(\cdot)}= \|\,|\nabla u|\,\|_{p(\cdot)}$. On $W^{1,p(\cdot)}(\Omega)$, the norm $\|\cdot\|_{1,p(\cdot)}$ is  equivalent to
\begin{align*}
	\|u\|_{1}:=\inf\left\{\lambda >0 \ : \
	\int_\Omega
	\left[\l|\frac{\nabla u(x)}{\lambda}\r|^{p(x)}+\l|\frac{u(x)}{\lambda}\r|^{p(x)}\right]\;\diff x\le1\right\}
\end{align*}
with the relation
\begin{align}\label{equi.norms}
	\frac{1}{2}\|u\|_{1,p(\cdot)}\leq \|u\|_1\leq 2 \|u\|_{1,p(\cdot)}\quad \text{for all }u\in W^{1,p(\cdot)}(\Omega ).
\end{align}
Moreover, we define
\begin{align*}
	\Wpzero{p(\cdot)}= \overline{C^\infty_0(\Omega)}^{\|\cdot\|_{1,p(\cdot)}}.
\end{align*}
The spaces $\Wp{p(\cdot)}$ and $\Wpzero{p(\cdot)}$ are both separable and reflexive Banach spaces, in fact uniformly convex Banach spaces. In the space $\Wpzero{p(\cdot)}$, the Poincar\'e inequality holds, namely
\begin{align*}
	\|u\|_{p(\cdot)} \leq c_0 \|\nabla u\|_{p(\cdot)} \quad\text{for all } u \in \Wpzero{p(\cdot)}
\end{align*}
with some $c_0>0$. Therefore, we can consider on $\Wpzero{p(\cdot)}$ the equivalent norm
\begin{align*}
	\|u\|_{0}=\|\nabla u\|_{p(\cdot)} \quad\text{for all } u \in \Wpzero{p(\cdot)}.
\end{align*}
For $p \in C_+(\close)$ we introduce the critical Sobolev variable exponent $\hat{p}^*$ and the corresponding one $\hat{p}_*$ on the boundary defined by
\begin{align*}
	\hat{p}^*(x)=
	\begin{cases}
		\frac{Np(x)}{N-p(x)} & \text{if }p(x)<N,\\
		\infty & \text{if } N \leq p(x),
	\end{cases} \quad\text{for all }x\in\close
\end{align*}
and
\begin{align*}
	\hat{p}_*(x)=
	\begin{cases}
	\frac{(N-1)p(x)}{N-p(x)} & \text{if }p(x)<N,\\
	\infty  & \text{if } N \leq p(x),
	\end{cases} \quad\text{for all }x\in\Gamma.
\end{align*}
It is well known that $W^{1,p(\cdot)}(\Omega) \hookrightarrow L^{q(\cdot) }(\Omega )$ is compactly embedded for any $q\in C(\close)$ satisfying $1\leq  q(x)<\hat{p}^*(x)$ for all $x\in\close$, see, for example, Fan \cite{Fan-2010}, and that $W^{1,p(\cdot)}(\Omega) \hookrightarrow L^{r(\cdot) }(\Gamma )$ is compactly embedded for any $r\in C(\Gamma)$ satisfying $1\leq  r(x)<\hat{p}_*(x)$ for all $x\in\Gamma$, see, for example, Fan \cite[Corollary 2.4]{Fan-2008}.

In order to state critical embeddings, we need more regularity on the function $p$. To this end, we denote by $C^{0, \frac{1}{|\log t|}}(\close)$ the set of all functions $h\colon \close \to \R$ that are log-H\"older continuous, that is, there exists $C>0$ such that
\begin{align*}
	|h(x)-h(y)| \leq \frac{C}{|\log |x-y||}\quad\text{for all } x,y\in \close \text{ with } |x-y|<\frac{1}{2}.
\end{align*}

Now we can state the critical embedding from $\Wp{p(\cdot)}$ into $\Lp{\hat{p}^*(\cdot)}$, see Diening-Harjulehto-H\"{a}st\"{o}-R$\mathring{\text{u}}$\v{z}i\v{c}ka \cite[Corollary 8.3.2]{Diening-Harjulehto-Hasto-Ruzicka-2011} or Fan \cite[Proposition 2.2]{Fan-2010}.

\begin{proposition}\label{embedding_critical}
	Let $p\in C^{0, \frac{1}{|\log t|}}(\close) \cap C_+(\close)$ and let $q\in C(\close)$ be such that
	\begin{align*}
		1\leq  q(x)\leq \hat{p}^*(x) \quad \text{for all }x\in\close.
	\end{align*}
	Then, we have the continuous embedding
	\begin{center}
		$W^{1,p(\cdot)}(\Omega) \hookrightarrow L^{q(\cdot) }(\Omega ).$
	\end{center}
	In particular, we have
	\begin{align*}
		\|u\|_{q(\cdot)}\leq C\|u\|_{1,p(\cdot)} \quad \text{for all } u\in W^{1,p(\cdot)}(\Omega).
	\end{align*}
	If $q(x)< \hat{p}^*(x)$ for all $x\in\overline{\Omega}$, then the embedding above is compact.
\end{proposition}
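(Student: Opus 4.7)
The plan is to reduce everything to the critical endpoint case $q(x)=\hat p^*(x)$, since once the continuous embedding $W^{1,p(\cdot)}(\Omega)\hookrightarrow L^{\hat p^*(\cdot)}(\Omega)$ is established, any $q\in C(\close)$ with $1\le q(x)\le \hat p^*(x)$ is handled by combining it with the inclusion $L^{\hat p^*(\cdot)}(\Omega)\subseteq L^{q(\cdot)}(\Omega)$ provided by Proposition~2.1 applied pointwise to the two exponents. The norm estimate then just concatenates the two continuous inclusions, so the genuinely new content is the critical Sobolev embedding; the bound by $\|u\|_{1,p(\cdot)}$ is obtained by using that norm's equivalence with $\|\cdot\|_1$ (see \eqref{equi.norms}) and invoking Proposition~\ref{norm-modular} to pass between the Luxemburg norm and the modular $\rho$.

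For the critical embedding itself, I would use the localization method powered by the log-H\"older condition. Concretely, fix $u\in W^{1,p(\cdot)}(\Omega)$ with $\|u\|_1\le1$, extend it to a $W^{1,p(\cdot)}$ function on $\R^N$ with comparable norm (using the Lipschitz character of $\Gamma$), and cover $\close$ by finitely many balls $B_j=B(x_j,r_j)$ with radii so small that, on each $B_j$, the oscillation of $p$ is controlled by $|\log r_j|^{-1}$. On such a ball one freezes the exponent at $p_j:=p(x_j)$ and applies the classical constant-exponent Sobolev inequality, which yields a bound for $\int_{B_j}|u|^{p_j^*}\,dx$ in terms of $\int_{B_j}(|\nabla u|^{p_j}+|u|^{p_j})\,dx$. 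The log-H\"older hypothesis is exactly what is needed to absorb the pointwise discrepancy $|u(x)|^{\hat p^*(x)-p_j^*}$ on $B_j$: a term of the form $r_j^{-|\hat p^*(x)-p_j^*|}$ arises, and the log-H\"older modulus forces such a factor to stay bounded by an absolute constant. Summing over the finite cover and translating the modular estimate back into a norm estimate via Proposition~\ref{norm-modular} produces the desired continuous embedding.

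For the compactness statement when $q(x)<\hat p^*(x)$ for all $x\in\close$, I would exploit that both $q$ and $\hat p^*$ are continuous on the compact set $\close$, so there exists $\delta>0$ and a function $r\in C(\close)$ with $q(x)+\delta\le r(x)\le \hat p^*(x)-\delta$ everywhere. The embedding $W^{1,p(\cdot)}(\Omega)\hookrightarrow L^{r(\cdot)}(\Omega)$ is compact by the subcritical compactness result quoted before the proposition, while $L^{r(\cdot)}(\Omega)\hookrightarrow L^{q(\cdot)}(\Omega)$ is a continuous inclusion by Proposition~2.1 since $q\le r$. Composing a compact map with a continuous one yields a compact embedding from $W^{1,p(\cdot)}(\Omega)$ into $L^{q(\cdot)}(\Omega)$.

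The main obstacle is clearly the critical step inside the localization: quantitatively turning the log-H\"older modulus of $p$ into a uniform bound on the exponent discrepancy $r_j^{\hat p^*(x)-p_j^*}$ across the cover. This requires a careful choice of radii $r_j$ tied to the oscillation of $p$ on each ball, and it is the step where a purely continuous (non-log-H\"older) $p$ would fail — constant-exponent Sobolev inequalities on $B_j$ do give the correct scaling, but without the logarithmic modulus one cannot trade an arbitrarily small scale against the exponent gap. Once this passage is made quantitative, the rest of the argument is bookkeeping between norm and modular and between local and global estimates.
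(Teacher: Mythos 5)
The paper does not actually prove this proposition: it is quoted verbatim from Diening--Harjulehto--H\"ast\"o--R\r{u}\v{z}i\v{c}ka [Cor.~8.3.2] and Fan [Prop.~2.2], so there is no in-paper argument to compare against. What you have written is a self-contained sketch, and two of its three pieces are sound, but the central one has a genuine gap.

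The reduction to the endpoint $q=\hat p^*$ via the inclusion $L^{\hat p^*(\cdot)}(\Omega)\subseteq L^{q(\cdot)}(\Omega)$ is fine when $p^+<N$ (and needs a small patch when $\hat p^*$ takes the value $\infty$, since $L^{\hat p^*(\cdot)}$ is then not a sensible target; one should instead work with a finite $q\le\hat p^*$ directly). The compactness argument --- insert a continuous intermediate exponent $r$ with $q<r<\hat p^*$, use the known subcritical compact embedding into $L^{r(\cdot)}$, then the trivial inclusion $L^{r(\cdot)}\subseteq L^{q(\cdot)}$ --- is correct and is the standard argument.

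The gap is in the critical localization step. After freezing $p_j=p(x_j)$ on $B_j$ and applying the constant-exponent Sobolev inequality, the discrepancy between $\int_{B_j}|u|^{\hat p^*(x)}\,dx$ and $\int_{B_j}|u|^{p_j^*}\,dx$ is governed pointwise by $|u(x)|^{\hat p^*(x)-p_j^*}$, not by a factor $r_j^{-|\hat p^*(x)-p_j^*|}$. A radius-dependent factor would only emerge if one had a pointwise bound of the form $|u(x)|\lesssim r_j^{-1}$ on $B_j$, which a generic $W^{1,p(\cdot)}$ function does not satisfy; and the inequality you actually need, $\hat p^*(x)\le (p_j)^*$ with $p_j$ chosen so that the frozen-exponent Sobolev space contains $u$, goes in the wrong direction (taking $p_j=\min_{B_j}p$ to ensure $u\in W^{1,p_j}(B_j)$ forces $(p_j)^*\le\hat p^*(x)$, the opposite of what the modular comparison requires). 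Absorbing the exponent oscillation really does use log-H\"older continuity, but the mechanism is more delicate than ``oscillation times $\log r_j$ is bounded'': in the cited sources it runs through the boundedness of the Hardy--Littlewood maximal operator (or Riesz potential) on $L^{p(\cdot)}$, not through a direct ball-by-ball exponent swap. Likewise, the extension of $u$ to a $W^{1,p(\cdot)}(\R^N)$ function of comparable norm is itself a nontrivial theorem that rests on log-H\"older continuity of an extended exponent; invoking it as a one-liner hides a second dependency on exactly the machinery being established. So the outline is conceptually in the right neighbourhood, but the key passage --- turning the log-H\"older modulus into the uniform bound you assert --- is precisely the part that is missing, and it cannot be closed by the factorisation you describe.
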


The next theorem is an extension of the classical boundary trace embedding theorem for the variable exponent case which requires that $p$ belongs a subclass of  $C^{0, \frac{1}{|\log t|}}(\close)$. The case $p^+<N$  was obtained in Fan \cite[Theorem 2.1]{Fan-2008} and this restriction can be avoided as shown in the following proposition.

\begin{proposition}\label{embedding_critical_boundary}
	Suppose that $p\in C_+(\close)\cap W^{1,\gamma}(\Omega)$ for some $\gamma>N$. Let $s\in C(\close)$ be such that
	\begin{align*}
		1\leq  s(x)\leq \hat{p}_*(x) \quad \text{for all }x\in\close
	\end{align*}
	Then, it holds that
	\begin{align*}
		W^{1,p(\cdot)}(\Omega)\hookrightarrow L^{s(\cdot) }(\Gamma)
	\end{align*}
	is continuously embedded.
\end{proposition}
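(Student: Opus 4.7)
The strategy is a localization argument that splits $\Gamma$ into patches on which either Fan's subcritical trace theorem (the cited result requiring $p^{+}<N$) applies, or the critical exponent is locally infinite and a simpler constant-exponent comparison suffices. The assumption $p\in W^{1,\gamma}(\Omega)$ with $\gamma>N$ enters only through Morrey's embedding, which gives $p\in C(\close)$; this continuity is what enables the patching.

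First I would cover the compact set $\Gamma$ by finitely many open balls $B_{1},\dots,B_{k}\subset\R^{N}$ centered on $\Gamma$, small enough that each $\Omega_{i}:=B_{i}\cap\Omega$ is a bounded Lipschitz subdomain and that each $B_{i}$ is of one of two types. A \emph{Type A} ball is chosen around a point $x_{0}\in\Gamma$ with $p(x_{0})<N$, and by continuity of $p$ can be shrunk so that $p^{+}|_{\close\cap B_{i}}<N$. A \emph{Type B} ball is chosen around a point $x_{0}\in\Gamma$ with $p(x_{0})\geq N$, and by continuity of $p$ can be shrunk so that $p^{-}|_{\close\cap B_{i}}>N-\eps_{0}$ for a parameter $\eps_{0}>0$ to be fixed below in terms of the finite quantity $s^{+}:=\max_{\close}s$.

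On a Type A ball, Fan's theorem applied to the Lipschitz domain $\Omega_{i}$ with the restricted exponent $p|_{\close\cap B_{i}}$ yields a local trace inequality $\|u\|_{L^{s(\cdot)}(B_{i}\cap\Gamma)}\leq C_{i}\|u\|_{W^{1,p(\cdot)}(\Omega_{i})}$. On a Type B ball, set $q:=N-\eps_{0}$; since $q\leq p$ on $\close\cap B_{i}$ and $\Omega_{i}$ is bounded, Proposition~2.1 gives $L^{p(\cdot)}(\Omega_{i})\hookrightarrow L^{q}(\Omega_{i})$ and hence $W^{1,p(\cdot)}(\Omega_{i})\hookrightarrow W^{1,q}(\Omega_{i})$. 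The classical constant-exponent Sobolev trace theorem on $\Omega_{i}$ then provides $W^{1,q}(\Omega_{i})\hookrightarrow L^{q_{*}}(B_{i}\cap\Gamma)$ with $q_{*}=(N-1)q/(N-q)$. Fixing $\eps_{0}$ so small that $q_{*}\geq s^{+}$, and using $s\leq s^{+}\leq q_{*}$ together with finiteness of the surface measure of $B_{i}\cap\Gamma$, Proposition~2.1 once more yields $L^{q_{*}}(B_{i}\cap\Gamma)\hookrightarrow L^{s(\cdot)}(B_{i}\cap\Gamma)$, closing the chain. Composing with the continuous restriction $W^{1,p(\cdot)}(\Omega)\to W^{1,p(\cdot)}(\Omega_{i})$ upgrades both local estimates to bounds by $\|u\|_{W^{1,p(\cdot)}(\Omega)}$.

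To globalize, I would pass to the modular $\rho_{\Gamma}(u)=\intor |u|^{s(x)}\diff\sigma$, use subadditivity over the finite cover $\Gamma\subset\bigcup_{i}B_{i}$ to write $\rho_{\Gamma}(u)\leq\sum_{i}\int_{B_{i}\cap\Gamma}|u|^{s(x)}\diff\sigma$, control each term by a power of $\|u\|_{W^{1,p(\cdot)}(\Omega)}$ via the local embeddings, and convert the resulting modular bound back into the Luxemburg-norm inequality $\|u\|_{L^{s(\cdot)}(\Gamma)}\leq C\|u\|_{W^{1,p(\cdot)}(\Omega)}$ using Proposition~\ref{norm-modular}. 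The main obstacle I expect is the quantitative matching on Type B patches: $\eps_{0}$ must be small enough that $(N-1)(N-\eps_{0})/\eps_{0}\geq s^{+}$, and simultaneously the balls $B_{i}$ must be small enough for $p^{-}|_{\close\cap B_{i}}>N-\eps_{0}$; keeping these two smallness requirements consistent with the Lipschitz-domain property of $\Omega_{i}$ is the delicate bookkeeping on which the argument hinges, but continuity of $p$ on the compact set $\close$ makes all three conditions simultaneously attainable.
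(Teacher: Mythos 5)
Your proposal is correct, but it takes a genuinely different route from the paper. The paper avoids localization entirely: it fixes $r\geq s^{+}$, sets $\hat{r}:=\frac{Nr}{N-1+r}<N$, and replaces $p$ by the \emph{globally} truncated exponent $\tilde{p}:=\min(p,\hat{r})$, which is still in $C_{+}(\close)\cap W^{1,\gamma}(\Omega)$ and satisfies $\tilde{p}^{+}\leq\hat{r}<N$. A short case check shows $s(x)\leq\tilde{p}_{*}(x)$ for all $x$, so a single application of Fan's Theorem 2.1 gives $W^{1,p(\cdot)}(\Omega)\hookrightarrow W^{1,\tilde{p}(\cdot)}(\Omega)\hookrightarrow L^{\tilde{p}_{*}(\cdot)}(\Gamma)\hookrightarrow L^{s(\cdot)}(\Gamma)$. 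You instead cover $\Gamma$ by finitely many balls, use Fan's theorem on Type A patches where $p<N$ and the classical constant-exponent trace theorem on Type B patches where $p\geq N$, then glue via modular subadditivity and the norm--modular equivalence. Both reach the same conclusion; the two smallness constraints you flag on Type B patches (choosing $\eps_{0}$ with $(N-1)(N-\eps_{0})/\eps_{0}\geq s^{+}$, then shrinking balls so $p^{-}>N-\eps_{0}$) are indeed simultaneously satisfiable by uniform continuity of $p$, and the modular-to-norm conversion works by Proposition~\ref{norm-modular}, so your argument does close. The paper's truncation is the slicker of the two because it collapses the two regimes into one, avoids the patching bookkeeping, and never needs the constant-exponent trace theorem; your version is more elementary in spirit and mirrors the covering technique that the paper itself uses later in the proofs of Theorems~\ref{Theo.D} and~\ref{Theo.N}. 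One minor point worth tightening in a write-up: you want $p<N$ on $\overline{\Omega_{i}}$ (the closure, which can touch $\partial B_{i}$), not just on $\close\cap B_{i}$; this is easily arranged by taking $p<N$ on the slightly larger compact set $\overline{B_{i}}\cap\close$, again by continuity.
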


\begin{proof}
	Let $s\in C(\close)$ satisfying $1\leq  s(x)\leq \hat{p}_*(x)$ for all $x\in\close.$  We fix $r\geq s^+$ and define
	\begin{align*}
		\hat{r}:=\frac{Nr}{N-1+r}.
	\end{align*}
	Then $\hat{r}<N$ and
	\begin{align}\label{2.hat.r}
		r=\frac{(N-1)\hat{r}}{N-\hat{r}}.
	\end{align}
	For $x\in\close$, we define
	\begin{align}\label{2.tilde.p}
		\tilde{p}(x):=
		\begin{cases}
			p(x) & \text{if }p(x)<\hat{r}, \\
			\hat{r} & \text{if }p(x) \geq \hat{r}.
		\end{cases}
	\end{align}
	It is clear that $\tilde{p}\in C_+(\close)\cap W^{1,\gamma}(\Omega)$ and $\tilde{p}^+\leq \hat{r}<N$. If $p(x)<\hat{r}$, then by \eqref{2.tilde.p}, we have
	\begin{align*}
		s(x)\leq \hat{p}_*(x)=\frac{(N-1)\tilde{p}(x)}{N-\tilde{p}(x)}.
	\end{align*}
	In the case $p(x)\geq \hat{r}$, by the choice of $r$, \eqref{2.hat.r} and \eqref{2.tilde.p}, we obtain
	\begin{align*}
		s(x)\leq r=\frac{(N-1)\hat{r}}{N-\hat{r}}= \frac{(N-1)\tilde{p}(x)}{N-\tilde{p}(x)}.
	\end{align*}
	Thus we derive
	\begin{align*}
		s(x) \leq \frac{(N-1)\tilde{p}(x)}{N-\tilde{p}(x)}=:\tilde{p}_*(x)\quad \text{for all }x\in\close.
	\end{align*}
	Hence, from Fan \cite[Theorem 2.1]{Fan-2008} we get the continuous embeddings
	\begin{equation*}
	W^{1,p(\cdot)}(\Omega)\hookrightarrow L^{\tilde{p}_*(\cdot) }(\Gamma)\hookrightarrow L^{s(\cdot) }(\Gamma).
	\end{equation*}
	The proof is complete.
\end{proof}

\begin{remark}
	Note that for a bounded domain $\Omega\subset \R^N$ and $\gamma>N$ we have the following inclusions
	\begin{align}\label{inclusions}
		C^{0,1}(\close)\subset \Wp{\gamma}\subset C^{0,1-\frac{N}{\gamma}}(\close) \subset C^{0, \frac{1}{|\log t|}}(\close).
	\end{align}
	
\end{remark}
The following lemma concerning the geometric convergence of sequences of numbers will be needed for the De Giorgi iteration arguments below. It can be found in Ho-Sim \cite[Lemma 4.3]{Ho-Sim-2015}. The case $\mu_1=\mu_2$ is contained in Lady{\v{z}}enskaja-Solonnikov-Ural{\cprime}ceva \cite[Chapter II, Lemma 5.6]{Ladyzenskaja-Solonnikov-Uralceva-1968}, see also DiBenedetto \cite[Chapter I, Lemma 4.1]{DiBenedetto-1993}.
\begin{lemma}\label{leRecur}
	Let $\{Z_n\}, n=0,1,2,\ldots,$ be a sequence of positive numbers, satisfying the recursion inequality
	\begin{align*}
		Z_{n+1} \leq K b^n \left (Z_n^{1+\mu_1}+ Z_n^{1+\mu_2} \right ) , \quad n=0,1,2, \ldots,
	\end{align*}
	for some $b>1$, $K>0$ and $\mu_2\geq \mu_1>0$. If
	\begin{align*}
		Z_0 \leq \min \left(1,(2K)^{-\frac{1}{\mu_1}} b^{-\frac{1}{\mu_1^2}}\right)
	\end{align*}
	or
	\begin{align*}
		Z_0 \leq \min  \left((2K)^{-\frac{1}{\mu_1}} b^{-\frac{1}{\mu_1^2}}, (2K)^{-\frac{1}{\mu_2}}b^{-\frac{1}{\mu_1 \mu_2}-\frac{\mu_2-\mu_1}{\mu_2^2}}\right),
	\end{align*}
	then $Z_n \leq 1$ for some $n \in \N \cup \{0\}$. Moreover,
	\begin{align*}
		Z_n \leq \min \left(1,(2K)^{-\frac{1}{\mu_1}} b^{-\frac{1}{\mu_1^2}} b^{-\frac{n}{\mu_1}}\right), \quad \text{ for all }n \geq n_0,
	\end{align*}
	where $n_0$ is the smallest $n \in \N \cup \{0\}$ satisfying $Z_n \leq 1$. In particular, $Z_n \to 0$ as $n \to \infty$.
\end{lemma}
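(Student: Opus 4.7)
The plan is to establish the stronger bound $Z_n \leq Z_0 \, b^{-n/\mu_1}$ for all $n \geq 0$ by induction; both the existence of some $n$ with $Z_n \leq 1$ and the ``Moreover'' statement will then follow. The crucial preliminary observation is that the two-term recursion can be replaced by a single-term one in each of two regimes: if $Z_n \leq 1$ then $Z_n^{1+\mu_2} \leq Z_n^{1+\mu_1}$ (since $\mu_2 \geq \mu_1$), so $Z_{n+1} \leq 2Kb^n Z_n^{1+\mu_1}$; whereas if $Z_n > 1$ the opposite monotonicity yields $Z_{n+1} \leq 2Kb^n Z_n^{1+\mu_2}$.

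For the inductive step I would substitute the ansatz $Z_n \leq Z_0 \, b^{-n/\mu_1}$ into each of the two simplified recursions and require that the resulting bound on $Z_{n+1}$ not exceed $Z_0 \, b^{-(n+1)/\mu_1}$. In the regime $Z_n \leq 1$ this produces the constraint $Z_0 \leq (2K)^{-1/\mu_1} b^{-1/\mu_1^2}$, which is the first term appearing in each of the two hypotheses. In the regime $Z_n > 1$ the constraint reads $2K Z_0^{\mu_2} b^{[n(\mu_1-\mu_2) + 1]/\mu_1} \leq 1$; since $\mu_1 - \mu_2 \leq 0$, the $b$-exponent is non-increasing in $n$ and attains its maximum $1/\mu_1$ at $n=0$, so the constraint reduces to $Z_0 \leq (2K)^{-1/\mu_2} b^{-1/(\mu_1\mu_2)}$, which is implied by the $D_0$-condition of the second hypothesis (the additional factor $b^{-(\mu_2-\mu_1)/\mu_2^2} \leq 1$ providing slack). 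Under Case~1, the bound $Z_0 \leq 1$ forces $Z_n \leq Z_0 b^{-n/\mu_1} \leq 1$ and the second regime never arises, so only the first constraint is required; under Case~2 both regimes may occur and both constraints are satisfied.

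The bound $Z_n \leq Z_0 \, b^{-n/\mu_1}$ together with $b > 1$ immediately imply $Z_n \to 0$, so $Z_n \leq 1$ for all sufficiently large $n$ and $n_0$ is well defined. For the ``Moreover'' statement I would then restart the iteration at $n_0$: since $Z_{n_0} \leq Z_0 \, b^{-n_0/\mu_1} \leq C_0 \, b^{-n_0/\mu_1}$ with $C_0 := (2K)^{-1/\mu_1} b^{-1/\mu_1^2}$, and since from $n_0$ onward only the $Z_n \leq 1$ regime is relevant, a shifted version of the classical single-exponent geometric-decay iteration applied to $W_k := Z_{n_0+k}$ with the translated constant $2K b^{n_0}$ propagates the bound and yields $Z_n \leq C_0 \, b^{-n/\mu_1}$ together with $Z_n \leq 1$ for every $n \geq n_0$.

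The main obstacle I anticipate is the delicate exponent bookkeeping in the $Z_n > 1$ regime, in particular the verification that the $b$-exponent $[n(\mu_1 - \mu_2) + 1]/\mu_1$ is monotone non-increasing in $n$, which collapses the infinite family of inductive constraints to the single one at $n = 0$. A second subtle point is ensuring that once $Z_{n_0} \leq 1$, the shifted iteration cannot allow $Z_{n_0+1}$ to exceed $1$; this is precisely why the intermediate quantitative bound $Z_{n_0} \leq C_0 \, b^{-n_0/\mu_1}$, and not merely $Z_{n_0} \leq 1$, is essential as the starting point for the second iteration.
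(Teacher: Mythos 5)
The paper does not prove this lemma; it cites Ho--Sim [Lemma~4.3] (with the $\mu_1=\mu_2$ case traced to Lady\v{z}enskaja--Solonnikov--Ural\cprime ceva), so there is no internal proof to compare against. Judged on its own, your argument is sound and almost certainly the natural route the cited source also takes: collapse the two-term recursion to a one-term recursion by comparing $Z_n^{1+\mu_1}$ with $Z_n^{1+\mu_2}$ according to whether $Z_n\le1$ or $Z_n>1$, and push the ansatz $Z_n\le Z_0\,b^{-n/\mu_1}$ through by induction, reading off the two constraints $Z_0\le(2K)^{-1/\mu_1}b^{-1/\mu_1^2}$ and $Z_0\le(2K)^{-1/\mu_2}b^{-1/(\mu_1\mu_2)}$ from the two regimes. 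Your bookkeeping in the $Z_n>1$ regime (the $b$-exponent $[n(\mu_1-\mu_2)+1]/\mu_1$ is nonincreasing in $n$, hence maximized at $n=0$) is correct, and so is the observation that the extra factor $b^{-(\mu_2-\mu_1)/\mu_2^2}$ in the hypothesis is slack rather than necessary.

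The one place you should be more explicit is the \emph{Moreover} clause. As written, ``from $n_0$ onward only the $Z_n\le1$ regime is relevant'' presupposes exactly the claim $Z_n\le 1$ for all $n\ge n_0$ that you are trying to establish, and invoking the classical single-exponent lemma as a black box does not escape this, since its hypothesis (the one-term recursion) is only available once you already know $Z_n\le1$. You do flag the right fix---that the quantitative bound $Z_{n_0}\le C_0\,b^{-n_0/\mu_1}$, with $C_0=(2K)^{-1/\mu_1}b^{-1/\mu_1^2}$, is essential---but the actual closing computation is missing and should be stated: if $Z_n\le1$ and $Z_n\le C_0\,b^{-n/\mu_1}$, split $Z_n^{1+\mu_1}=Z_n\cdot Z_n^{\mu_1}$ and estimate
\begin{align*}
Z_{n+1}\le 2Kb^n\,Z_n\cdot Z_n^{\mu_1}\le 2Kb^n\cdot 1\cdot\bigl(C_0\,b^{-n/\mu_1}\bigr)^{\mu_1}=2K\,C_0^{\mu_1}=b^{-1/\mu_1}<1,
\end{align*}
and simultaneously $Z_{n+1}\le C_0\,b^{-(n+1)/\mu_1}$ by the same induction as before. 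This joint induction, seeded at $n_0$ where $Z_{n_0}\le 1$ holds by definition and $Z_{n_0}\le C_0\,b^{-n_0/\mu_1}$ holds from the main induction and $Z_0\le C_0$, gives both halves of the stated bound for all $n\ge n_0$. With this step filled in, the proof is complete.
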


In what follows we write
\begin{align*}
	v_+ := \max \{v,0\}\quad \text{and}\quad v_- := \max \{-v,0\}.
\end{align*}
Moreover, we denote by $|E|$ the $N$-dimensional Lebesgue measure of $E\subset\R^N$.

For $p \in C_+(\close)$ we redefine the critical variable exponents to $p$ by
\begin{align*}
	p^*(x)=
	\begin{cases}
		\hat{p}^*(x) & \text{if }p^+<N,\\
		q_1(x) & \text{if } N \leq p^+,
	\end{cases} \quad\text{for all }x\in\close
\end{align*}
and
\begin{align*}
	p_*(x)=
	\begin{cases}
		\hat{p}_*(x) & \text{if }p^+<N,\\
		q_2(x)  & \text{if } N \leq p^+,
	\end{cases} \quad\text{for all }x\in\close,
\end{align*}
where $q_1, q_2 \in C(\close)$ are arbitrarily chosen such that $p(x) < q_1(x)\leq \hat{p}^*(x)$ and $p(x) < q_2(x)\leq \hat{p}_*(x)$ for all $x \in \close$.

\section{The boundedness of solutions for the Dirichlet problem}\label{section_3}

In this section, we study the boundedness of weak solutions for the Dirichlet problem given in \eqref{D}. We suppose the following structure conditions on the data.
\begin{enumerate}
	\item[H(p1):]
		$p\in C^{0, \frac{1}{|\log t|}}(\close) \cap C_+(\close)$.
	\item[H(D):]
		The functions $\mathcal{A}\colon\Omega\times\R\times\R^N\to \R^N$ and $\mathcal{B}\colon\Omega \times \R\times \R^N\to \R$ are Carath\'eodory functions such that
		\begin{align*}
			\text{(A1)}\quad & |\mathcal{A}(x,s,\xi)| \leq a_1|\xi|^{p(x)-1}+a_2|s|^{p^*(x)\frac{p(x)-1}{p(x)}}+a_3,\\
			\text{(A2)}\quad & \mathcal{A}(x,s,\xi)\cdot \xi \geq a_4|\xi|^{p(x)}-a_5|s|^{p^*(x)}-a_6,\\
			\text{(B)}\quad & |\mathcal{B}(x,s,\xi)| \leq b_1|\xi|^{p(x) \frac{p^*(x)-1}{p^*(x)}}+b_2|s|^{p^*(x)-1}+b_3,
		\end{align*}
		for a.\,a.\,$x \in \Omega$, for all $s \in \R$, for all $\xi \in \R^N$ and with positive constants $a_j, j\in \{1,\ldots,6\}$ and $b_\ell, \ell \in \{1,2,3\}$.
\end{enumerate}

A function $u\in W_0^{1,p(\cdot)}(\Omega) $ is called a weak solution of problem \eqref {D}  if
\begin{align} \label{def_sol_D}
	\int_{\Omega}\mathcal{A}(x,u,\nabla u)\cdot \nabla \varphi \diff x
	= \int_{\Omega}\mathcal{B}(x,u, \nabla u)\varphi \diff x
\end{align}
is satisfied for all test functions $\varphi\in W_0^{1,p(\cdot)}(\Omega) $. By Proposition \ref{embedding_critical} along with the hypotheses H(D) and H(p1) it is easy to see that the definition above is well-defined.

The main result in this section is the following theorem about the boundedness of weak solutions of problem \eqref{D}. The proof is based on ideas of the paper of Winkert-Zacher \cite{Winkert-Zacher-2012} by applying De Giorgi's iteration technique along with the localization method.

\begin{theorem} \label{Theo.D}
	Let hypotheses H(D) and H(p1) be satisfied. Then, any weak solution of problem \eqref{D} is of class $L^\infty (\Omega)$.
\end{theorem}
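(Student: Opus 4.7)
The plan is to run a modified De Giorgi iteration on the superlevel sets $A_k:=\{x\in\Omega : u(x)>k\}$ for $k\geq\kappa>0$, where $\kappa$ will eventually be taken large. Since $u\in\Wpzero{p(\cdot)}$ and $k>0$, the truncation $(u-k)_+$ belongs to $\Wpzero{p(\cdot)}$ and is admissible in \eqref{def_sol_D}. Testing with $\ph=(u-k)_+$, using (A2) to bound the left-hand side from below and (B) with Young's inequality on the right (pairing $|\nabla u|^{p(x)(p^*(x)-1)/p^*(x)}$ with $(u-k)_+$ via exponents $p^*(x)/(p^*(x)-1)$ and $p^*(x)$), and splitting $A_k=\{k<u\leq 2k\}\cup\{u>2k\}$ to estimate $|u|^{p^*(x)}\leq 2^{(p^*)^+-1}\bigl[(u-k)_+^{p^*(x)}+k^{p^*(x)}\bigr]$ on $A_k$, one arrives at a Caccioppoli-type bound
\begin{align*}
\int_{A_k}|\nabla u|^{p(x)}\diff x \leq C_1\int_{A_k}(u-k)_+^{p^*(x)}\diff x + C_2\, k^{(p^*)^+}|A_k|+C_3.
\end{align*}
The difficulty peculiar to critical growth is that the first term on the right has the same scaling as the left under the Sobolev embedding and thus cannot be absorbed globally in the usual way.

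To overcome this I would cover $\close$ by finitely many balls $B_{r_i}(x_i)$ of sufficiently small radius, exploiting H(p1) so that the oscillation of $p^*$ on each piece is smaller than any preset $\tau>0$. On each ball I would localize by introducing a cutoff $\eta_i$ and testing with $\eta_i^{p^+}(u-k)_+$, then apply the critical embedding of Proposition \ref{embedding_critical} to $\eta_i(u-k)_+$. Since on $A_k\cap B_{r_i}$ the exponents $p$ and $p^*$ are almost constant, a Hölder split of the form
\begin{align*}
\int_{A_k\cap B_{r_i}}(u-k)_+^{p^*(x)}\diff x \leq C\,|A_k\cap B_{r_i}|^{\delta}\Big(\int_{A_k\cap B_{r_i}}|\nabla(u-k)_+|^{p(x)}\diff x + 1\Big)^{\alpha}
\end{align*}
becomes available for some $\delta>0$ and some $\alpha>1$, extracting the crucial measure-gain factor that renders the critical term absorbable once the measure of $A_k$ is small.

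With this in hand I set $k_n:=\kappa(2-2^{-n})$ and
\begin{equation*}
Z_n:=\int_{A_{k_n}}(u-k_n)_+^{p^*(x)}\diff x + k_n^{(p^*)^+}|A_{k_n}|.
\end{equation*}
Summing the localized estimates over the finite cover and combining them with the Cavalieri-type inequality $|A_{k_{n+1}}|\leq (k_{n+1}-k_n)^{-q}\int_{A_{k_n}}(u-k_n)_+^{q}\diff x$ for a suitable exponent $q$, I would derive a recursion
\begin{equation*}
Z_{n+1}\leq K\, b^n\bigl(Z_n^{1+\mu_1}+Z_n^{1+\mu_2}\bigr)
\end{equation*}
that fits exactly the framework of Lemma \ref{leRecur}.

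The main obstacle, and the one that forces the choice of $\kappa$, is the initial smallness hypothesis of Lemma \ref{leRecur}. Since $u\in L^{p^*(\cdot)}(\Omega)$ by Proposition \ref{embedding_critical} and the integrand is absolutely continuous with respect to Lebesgue measure, both $\int_{A_\kappa}(u-\kappa)_+^{p^*(x)}\diff x$ and $|A_\kappa|$ can be made arbitrarily small by enlarging $\kappa$, so $Z_0$ meets the threshold. Lemma \ref{leRecur} then yields $Z_n\to 0$, forcing $|A_{2\kappa}|=0$, i.e.\ $u\leq 2\kappa$ almost everywhere on $\Omega$. Since the structural assumptions H(D) are invariant under the change $u\mapsto -u$, repeating the argument for $-u$ gives $u\geq -2\kappa$ a.e., and therefore $u\in\Linf$.
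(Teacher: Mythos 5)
Your overall architecture---level-set truncation, a finite cover of $\close$ by small balls, the critical Sobolev embedding localised to each piece, and a geometric recursion closed by Lemma~\ref{leRecur}---matches the paper. But the device you propose for producing the superlinear power is wrong: the ``H\"older split''
\begin{align*}
\int_{A_k\cap B_{r_i}}(u-k)_+^{p^*(x)}\diff x \leq C|A_k\cap B_{r_i}|^{\delta}\left(\int_{A_k\cap B_{r_i}}|\nabla(u-k)_+|^{p(x)}\diff x+1\right)^{\alpha}
\end{align*}
with $\delta>0$ would require integrability of $(u-k)_+$ strictly above the critical exponent $\hat{p}^*$, which the Sobolev embedding does not provide---precisely the regime where H\"older has no slack. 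No measure-gain factor is in fact needed. The paper obtains superlinearity entirely from the modular/norm conversion of Proposition~\ref{norm-modular}: since $\kappa_*$ is chosen so large that the relevant modulars lie below $1$, on each $\Omega_i=B_i\cap\Omega$ one has
\begin{align*}
\int_{\Omega_i}(u-\kappa_{n+1})_+^{p^*(x)}\diff x\leq C\left(\int_{\Omega_i}|\nabla(u-\kappa_{n+1})_+|^{p(x)}\diff x+\int_{\Omega_i}(u-\kappa_{n+1})_+^{p(x)}\diff x\right)^{(p^*)_i^-/p_i^+},
\end{align*}
and the exponent $(p^*)_i^-/p_i^+>1$, ensured by shrinking the cover radius $R$ so that $p_i^+<(p^*)_i^-$, is what supplies the gain. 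This is your $\alpha>1$, with $\delta=0$. Also, testing with a cutoff $\eta_i^{p^+}(u-k)_+$ is not needed here: the boundedness argument tests globally with $(u-\kappa_{n+1})_+$ and localises only at the embedding step; cutoffs enter only in the H\"older continuity arguments of Section~\ref{section_5}.

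Second, your iteration quantity $Z_n=\int_{A_{k_n}}(u-k_n)_+^{p^*(x)}\diff x+k_n^{(p^*)^+}|A_{k_n}|$ does not close: Chebyshev gives $|A_{k_{n+1}}|\leq \kappa_*^{-(p^*)^-}2^{(n+1)(p^*)^+}\int_{A_{k_n}}(u-k_n)^{p^*(x)}\diff x$, so $k_{n+1}^{(p^*)^+}|A_{k_{n+1}}|$ is only \emph{linear} in the first summand of $Z_n$ and carries a factor $\kappa_*^{(p^*)^+-(p^*)^-}$ not controlled uniformly in $\kappa_*$, whereas Lemma~\ref{leRecur} requires strict superlinearity. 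The paper instead sets $Z_n=\int_{A_{\kappa_n}}|\nabla u|^{p(x)}\diff x+\int_{A_{\kappa_n}}(u-\kappa_n)^{p^*(x)}\diff x$ and treats $|A_{\kappa_{n+1}}|$ as an auxiliary quantity bounded by $2^{(p^*)^+(n+1)}Z_n$. A further consequence of the paper's choice is that chaining the Caccioppoli estimate (Claim~2) through the embedding estimate (Claim~1) produces a \emph{skip-one} recursion $Z_{n+1}\leq C b^n(Z_{n-1}^{1+\mu_1}+Z_{n-1}^{1+\mu_2})$, which the paper resolves by applying Lemma~\ref{leRecur} separately to the even and odd subsequences $Z_{2n}$ and $Z_{2n+1}$---a subtlety your one-step outline omits.
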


\begin{proof}
	The compactness of $\overline{\Omega}$  implies that, for any $R>0$, there exists a finite open cover $\{B_i(R)\}_{i=1,\ldots, m}$ of balls $B_i:=B_i(R)$ with radius $R$ such that $\overline{\Omega} \subset \bigcup\limits_{i=1}^{m}B_{i} $ (see, for example, Papageorgiou-Winkert \cite[1.4.86]{Papageorgiou-Winkert-2018}) and each $\Omega_i:=B_i\cap\Omega$ ($i=1,\cdots,m$) is a Lipschitz domain as well. We may take $R$ sufficiently small such that
	\begin{align}\label{D.loc.exp}
	p_i^+:= \max_{x\in \overline{B}_{i} \cap  \close}p(x)
	<{(p^*)}^{-}_{i}:=\min_{x\in \overline{B}_{i} \cap  \close} p^*(x)\quad \text{for all }i\in\{1,\cdots,m\}.
	\end{align}
	Let $u$ be a weak solution of problem \eqref{D}. Let $\kappa_*\geq 1$ be sufficiently large such that
	\begin{equation}\label{k*}
		\int_{A_{\kappa_*}}|\nabla u|^{p(x)}\diff x+\int_{A_{\kappa_*}}u^{p^*(x)}\diff x<1,
	\end{equation}
	where $A_\kappa:=\{x\in\Omega\,:\, u(x)>\kappa\}$ for $\kappa\in\R$.
	
	We define
	\begin{equation}\label{Zn.def}
		Z_n:=\int_{A_{\kappa_n}}|\nabla u|^{p(x)}\diff x+\int_{A_{\kappa_n} }(u-\kappa_{n})^{p^*(x)}\diff x,
	\end{equation}
	where
	\begin{equation*}
		\kappa_n:=\kappa_*\left(2-\frac{1}{2^n}\right)
		\quad \text{for }n \in \N_0=\{0,1,2,\ldots\}.
	\end{equation*}
	Obviously, it holds
	\begin{equation}\label{k_n}
		\kappa_n \nearrow 2\kappa_* \quad \text{and} \quad \kappa_* \leq \kappa_n <2\kappa_*  \quad \text{for all }n\in \mathbb{N}_0.
	\end{equation}
	Since $\kappa_{n}<\kappa_{n+1}$ and $A_{\kappa_{n+1}}\subset A_{\kappa_n}$ for all $n\in\N_0$, we have
	\begin{equation}\label{Zn.decreasing}
		Z_{n+1}\leq Z_n\quad\text{ for all } n\in \N_0.
	\end{equation}
	Moreover, for $x \in A_{\kappa_{n+1}}$ we see that
	\begin{equation*}
		u(x)- \kappa_n \geq u(x)\l(1-\frac{\kappa_n}{\kappa_{n+1}}\r) = \frac{u(x)}{2^{n+2}-1}.
	\end{equation*}
	Hence, we obtain
	\begin{equation} \label{est.u}
		u(x)\leq (2^{n+2}-1)(u(x)-\kappa_n)\quad \text{for a.\,a.\,}x\in A_{\kappa_{n+1}}\text{ and for all }n\in\N_0.
	\end{equation}
	Furthermore, we have
	\begin{align}\label{|A_{k_{n+1}}|}
		\begin{split}
			|A_{\kappa_{n+1}}|
			& \leq \int_{A_{\kappa_{n+1}}} \left (\frac{u-\kappa_n}{\kappa_{n+1}-\kappa_n}\right )^{p^*(x)} \diff x\\
			& \leq \int_{A_{\kappa_{n}}} \frac{2^{p^*(x)(n+1)}}{\kappa_*^{p^*(x)}} (u-\kappa_n)^{p^*(x)}\diff x \\
			& \leq \frac{2^{(p^*)^+(n+1)}}{\kappa_*^{(p^*)^-}} \int_{A_{\kappa_{n}}} (u-\kappa_{n})^{p^*(x)}\diff x \\
			& = \frac{2^{(p^*)^+(n+1)}}{\kappa_*^{(p^*)^-}} Z_n\\
			& \leq 2^{(p^*)^+(n+1)} Z_n\quad \text{for all }n\in\N_0.
		\end{split}
	\end{align}

	In the rest of the proof, we denote by $C_i$ ($i\in\N$) positive constants which are independent of $n$ and $\kappa_*$.	

	{\bf Claim 1:}
	There exist positive constants $\mu_1,\mu_2$ such that
	\begin{equation*}
		\int_{A_{\kappa_{n+1}}}(u-\kappa_{n+1})^{p^*(x)}\diff x
		\leq  C_1 2^{\frac{n\left((p^*)^+\right)^2}{p^-}}\left(Z_{n}^{1+\mu_1}+Z_{n}^{1+\mu_2}\right)\quad \text{for all }n\in\N_0.
	\end{equation*}
	
	First, note that
	\begin{align}\label{decompose1}
		\begin{split}
			\int_{A_{\kappa_{n+1}}}(u-\kappa_{n+1})^{p^*(x)}\diff x
			&=\int_{\Omega}(u-\kappa_{n+1})_+^{p^*(x)}\diff x\\
			&\leq \sum_{i=1}^m\int_{\Omega_{i}}(u-\kappa_{n+1})_+^{p^*(x)}\diff x.
		\end{split}
	\end{align}
	Let $i\in\{1,\cdots,m\}$. From \eqref{k*}, \eqref{k_n}, Proposition \ref{norm-modular}(i), (iii) and Proposition \ref{embedding_critical} for $\Omega=\Omega_{i}$ we have
	\begin{align*}
		&\int_{\Omega_{i}}(u-\kappa_{n+1})_+^{p^*(x)}\diff x\\
		&\leq \|(u-\kappa_{n+1})_+\|_{L^{p^*(\cdot)}(\Omega_{i})}^{(p^*)_i^-}\\
		& \leq C_2\left[\|\nabla (u-\kappa_{n+1})_+\|_{L^{p(\cdot)}(\Omega_{i})}+\|(u-\kappa_{n+1})_+\|_{L^{p(\cdot)}(\Omega_{i})}\right]^{(p^*)_i^-}.
	\end{align*}
	Then, again by Proposition \ref{norm-modular} along with \eqref{equi.norms} and \eqref{k*} this leads to
	\begin{align*}
		&\int_{\Omega_{i}}(u-\kappa_{n+1})_+^{p^*(x)}\diff x\\
		&\leq C_3\left(\int_{\Omega_{i}}|\nabla (u-\kappa_{n+1})_+|^{p(x)}\diff x+\int_{\Omega_{i}}(u-\kappa_{n+1})_+^{p(x)}\diff x\right)^{\frac{(p^*)_i^-}{p_i^+}}\\
		&\leq C_3\left(\int_{A_{\kappa_{n+1}}}|\nabla u|^{p(x)}\diff x+\int_{A_{\kappa_{n+1}}}(u-\kappa_{n+1})^{p^*(x)}\diff x+|A_{\kappa_{n+1}}|\right)^{\frac{(p^*)_i^-}{p_i^+}}.
	\end{align*}
	From this, \eqref{Zn.def}, \eqref{Zn.decreasing} and \eqref{|A_{k_{n+1}}|} we obtain
	\begin{align*}
		\int_{\Omega_{i}}(u-\kappa_{n+1})_+^{p^*(x)}\diff x
		\leq C_42^{\frac{n(p^*)^+(p^*)_i^-}{p_i^+}}Z_{n}^{\frac{(p^*)_i^-}{p_i^+}}.
	\end{align*}
	Combining this with \eqref{decompose1} gives
	\begin{equation*}
		\int_{\Omega}(u-\kappa_{n+1})_+^{p^*(x)}\diff x
		\leq C_52^{\frac{n\left((p^*)^+\right)^2}{p^-}}\left(Z_{n}^{1+\mu_1}+Z_{n}^{1+\mu_2}\right),
	\end{equation*}
	where
	\begin{align*}
		0<\mu_1:=\min_{1\leq i\leq m} \frac{(p^*)_i^-}{p_i^+}-1\leq\mu_2:=\max_{1\leq i\leq m} \frac{(p^*)_i^-}{p_i^+}-1
	\end{align*}
	 in view of \eqref{D.loc.exp}. This proves Claim 1.
	
	{\bf Claim 2:} It holds that
	\begin{equation*}
		\int_{A_{\kappa_{n+1}}}|\nabla u|^{p(x)}\diff x\leq  C_6 2^{n\l[\frac{\left((p^*)^+\right)^2}{p^-}+(p^*)^+\r]}\left(Z_{n-1}^{1+\mu_1}+Z_{n-1}^{1+\mu_2}\right)\quad \text{for all }n\in\N.
	\end{equation*}
	Testing \eqref{def_sol_D} with $\varphi =(u-\kappa_{n+1})_{+} \in W_0^{1,p(\cdot)}(\Omega)$ yields
	\begin{align*}
		\int_{\Omega} \mathcal{A}(x,u,\nabla u) \cdot \nabla(u-\kappa_{n+1})_{+} \diff x
		=\int_{\Omega }\mathcal{B}(x,u,\nabla u)(u-\kappa_{n+1})_{+} \diff x,
	\end{align*}
	which can be written as
	\begin{equation}\label{D.var.Eq}
		\int_{A_{\kappa_{n+1}}} \mathcal{A}(x,u,\nabla u) \cdot \nabla u \diff x
		=\int_{A_{\kappa_{n+1}} }\mathcal{B}(x,u,\nabla u)(u-\kappa_{n+1}) \diff x.
	\end{equation}
	Since $u\geq u-\kappa_{n+1} >0$ and $u> \kappa_{n+1} \geq 1$ on $A_{\kappa_{n+1}},$ using hypotheses (A2) and (B) and taking into account Young's inequality with $\eps \in (0,1]$, we estimate each term in \eqref{D.var.Eq} in order to get
	\begin{align*}
		&\int_{A_{\kappa_{n+1}}} \mathcal{A}(x,u,\nabla u)\cdot \nabla u \diff x\\
		&\geq a_4\int_{A_{\kappa_{n+1}} }|\nabla u|^{p(x)} \diff x-a_5\int_{A_{\kappa_{n+1}} }u^{p^*(x)}\diff x-a_6\int_{A_{\kappa_{n+1}} }1\diff x \\
		&\geq a_4\int_{A_{\kappa_{n+1}} }|\nabla u|^{p(x)} \diff x-\max\{a_5,a_6\}\int_{A_{\kappa_{n+1}} }u^{p^*(x)}\diff x,
	\end{align*}
	and
	\begin{align*}
		&\int_{A_{\kappa_{n+1}}} \mathcal{B}(x,u,\nabla u)(u-\kappa_{n+1})\diff x\\
		& \leq b_1\int_{A_{\kappa_{n+1}} }|\nabla u|^{p(x) \frac{p^*(x)-1}{p^*(x)}}u\diff x+b_2\int_{A_{\kappa_{n+1}}}u^{p^*(x)}\diff x +b_3 \int_{A_{\kappa_{n+1}}} 1 \diff x\\
		& \leq b_1\int_{A_{\kappa_{n+1}}} \left [ \eps^{\frac{p^*(x)-1}{p^*(x)}} |\nabla u|^{p(x)\frac{p^*(x)-1}{p^*(x)}} \eps^{-\frac{p^*(x)-1}{p^*(x)}}u \right ] \diff x+C_7\int_{A_{\kappa_{n+1}}}u^{p^*(x)}\diff x \\
		&\leq  b_1 \int_{A_{\kappa_{n+1}}} \eps |\nabla u|^{p(x)} \diff x + b_1 \int_{A_{\kappa_{n+1}}} \eps^{-(p^*(x)-1)} u^{p^*(x)}\diff x+C_7\int_{A_{\kappa_{n+1}} }u^{p^*(x)}\diff x\\
		&\leq  \eps b_1 \int_{A_{\kappa_{n+1}}}  |\nabla u|^{p(x)} \diff x + \l(b_1 \eps^{-((p^*)^+-1)}+C_7\r) \int_{A_{\kappa_{n+1}}}  u^{p^*(x)}\diff x.
	\end{align*}
	Taking $\eps=\min \{1,\frac{a_4}{2b_1}\}$ and combining the estimates above with \eqref{D.var.Eq} and then using \eqref{est.u}, we obtain
	\begin{align*}
		\int_{A_{\kappa_{n+1}}}|\nabla u|^{p(x)}\diff x &
		\leq  C_8\int_{A_{\kappa_{n+1}}}u^{p^*(x)}\diff x\\
		& \leq C_8\int_{A_{\kappa_{n+1}}}\left[(2^{n+2}-1)(u-\kappa_n)\right]^{p^*(x)}\diff x.
	\end{align*}
	Hence,
	\begin{equation*}
		\int_{A_{\kappa_{n+1}}}|\nabla u|^{p(x)}\diff x
		\leq C_92^{n(p^*)^+}\int_{\Omega}(u-\kappa_n)_+^{p^*(x)}\diff x.
	\end{equation*}
	Then, Claim 2 follows from the last inequality and Claim 1.

	From Claims 1 and 2 along with \eqref{Zn.decreasing} we conclude that
	\begin{equation}\label{Recur}
		Z_{n+1}\leq C_{10} b^n\left(Z_{n-1}^{1+\mu_1}+Z_{n-1}^{1+\mu_2}\right)\quad \text{for all }n\in\N,
	\end{equation}
	where
	\begin{align*}
		b:=2^{\big[\frac{\left((p^*)^+\right)^2}{p^-}+(p^*)^+\big]}>1.
	\end{align*}
	This yields
	\begin{equation*}
		Z_{2(n+1)}\leq C_{10} b^{2n+1}\left(Z_{2n}^{1+\mu_1}+Z_{2n}^{1+\mu_2}\right)\quad \text{for all }n\in\N_0,
	\end{equation*}
	that is,
	\begin{equation}\label{Recur1}
		\tilde{Z}_{n+1}\leq bC_{10} \tilde{b}^n\left(\tilde{Z}_n^{1+\mu_1}+\tilde{Z}_n^{1+\mu_2}\right) \quad \text{for all }n\in\N_0,
	\end{equation}
	where $\tilde{Z}_n:=Z_{2n}$ and $\tilde{b}:=b^2$. Applying Lemma \ref{leRecur} to \eqref{Recur1} yields
	\begin{equation}\label{Recur+1}
		Z_{2n}=\tilde{Z}_n \to 0 \quad \text {as }  n\to \infty
	\end{equation}
	provided that
	\begin{equation}\label{Z_0}
		\tilde{Z}_{0}\leq \min\left\{(2bC_{10})^{-\frac{1}{\mu_1}}\ \tilde{b}^{-\frac{1}{\mu_1^{2}}},\left(2bC_{10}\right)^{-\frac{1}{\mu_2}}\ \tilde{b}^{-\frac{1}{\mu_1\mu_2}-\frac{\mu_2-\mu_1}{\mu_2^{2}}}\right\}.
	\end{equation}
	Again, from \eqref{Recur} we obtain
	\begin{equation*}
		Z_{2(n+1)+1}\leq C_{10} b^{2(n+1)}\left(Z_{2n+1}^{1+\mu_1}+Z_{2n+1}^{1+\mu_2}\right)\quad \text{for all } n\in\N_0,
	\end{equation*}
	which can be written as
	\begin{equation}\label{Recur2}
		\hat{Z}_{n+1}\leq \tilde{b}C_{10} \tilde{b}^n\left(\hat{Z}_n^{1+\mu_1}+\hat{Z}_n^{1+\mu_2}\right) \quad \text{for all }n\in\N,
	\end{equation}
	where $\hat{Z}_n:=Z_{2n+1}$. From Lemma \ref{leRecur} applied to \eqref{Recur2} it follows that
	\begin{equation}\label{Recur+2}
		Z_{2n+1}=\hat{Z}_n \to 0 \quad \text{as } n\to \infty
	\end{equation}
	provided that
	\begin{equation}\label{Z_0'}
		\hat{Z}_{0}\leq \min\left\{(2\tilde{b}C_{10})^{-\frac{1}{\mu_1}}\ \tilde{b}^{-\frac{1}{\mu_1^{2}}},\left(2\tilde{b}C_{10}\right)^{-\frac{1}{\mu_2}}\ \tilde{b}^{-\frac{1}{\mu_1\mu_2}-\frac{\mu_2-\mu_1}{\mu_2^{2}}}\right\}.
	\end{equation}
	Note that
	\begin{align*}
		\hat{Z}_{0}=Z_1\leq Z_0=\tilde{Z}_0\leq \int_{A_{\kappa_*}}|\nabla u|^{p(x)}\diff x+\int_{A_{\kappa_*} }u^{p^*(x)}\diff x.
	\end{align*}
	Therefore, by choosing $\kappa_*>1$ sufficiently large we have
	\begin{align*}
		&\int_{A_{\kappa_*}}|\nabla u|^{p(x)}\diff x+\int_{A_{\kappa_*} }u^{p^*(x)}\diff x\\
		&\leq \min\left\{1,(2\tilde{b}C_{10})^{-\frac{1}{\mu_1}}\ \tilde{b}^{-\frac{1}{\mu_1^{2}}},\left(2\tilde{b}C_{10}\right)^{-\frac{1}{\mu_2}}\ \tilde{b}^{-\frac{1}{\mu_1\mu_2}-\frac{\mu_2-\mu_1}{\mu_2^{2}}}\right\}.
	\end{align*}
	Hence, \eqref{k*}, \eqref{Z_0} and \eqref{Z_0'} are fulfilled and we obtain \eqref{Recur+1} and \eqref{Recur+2}. This means that
	\begin{align*}
		Z_n=\int_{A_{\kappa_n}}|\nabla u|^{p(x)}\diff x+\int_{A_{\kappa_n} }(u-\kappa_n)^{p^*(x)}\diff x\to 0 \quad \text{as } n\to\infty.
	\end{align*}
	In particular, we have
	\begin{align*}
		\int_{\Omega}(u-2\kappa_*)_+^{p^*(x)}\ \diff x=0.
	\end{align*}
	Consequently, $(u-2\kappa_*)_{+}=0$ a.\,e.\,in $\Omega$ and so
	\begin{align*}
		\underset{\Omega}{\mathop{\rm ess\,sup}}\ u \leq 2\kappa_*.
	\end{align*}
	
	Replacing $u$ by $-u$ in the arguments above we also obtain
	\begin{align*}
		\underset{\Omega}{\mathop{\rm ess\,sup}}\ (-u) \leq 2\kappa_*.
	\end{align*}
	Hence, $\|u\|_\infty\leq 2\kappa_*$. This finishes the proof.
\end{proof}

\section{The boundedness of solutions for the Neumann problem}\label{section_4}

In this section, we study the boundedness of solutions for the Neumann problem given in \eqref{N}.
We assume the following structure conditions on the functions involved.
\begin{enumerate}
	\item[H(p2):]
		$p\in C_+(\close)\cap \Wp{\gamma}$ for some $\gamma>N$.
	\item[H(N):]
	The functions $\mathcal{A}\colon\Omega\times\R\times\R^N\to \R^N$, $\mathcal{B}\colon\Omega \times \R\times \R^N\to \R$ and $\mathcal{C}\colon \Gamma \times \R \to\R$ are Carath\'eodory functions such that
	\begin{align*}
	\text{(A1)}\quad & |\mathcal{A}(x,s,\xi)| \leq a_1|\xi|^{p(x)-1}+a_2|s|^{p^*(x)\frac{p(x)-1}{p(x)}}+a_3 && \text{for a.\,a.\,}x\in\Omega,\\
	\text{(A2)}\quad & \mathcal{A}(x,s,\xi)\cdot \xi \geq a_4|\xi|^{p(x)}-a_5|s|^{p^*(x)}-a_6 && \text{for a.\,a.\,}x\in\Omega,\\
	\text{(B)}\quad & |\mathcal{B}(x,s,\xi)| \leq b_1|\xi|^{p(x) \frac{p^*(x)-1}{p^*(x)}}+b_2|s|^{p^*(x)-1}+b_3&&\text{for a.\,a.\,}x\in\Omega,\\
	\text{(C)}\quad & |\mathcal{C}(x,s)| \leq c_1|s|^{p_*(x)-1}+c_2, &&\text{for a.\,a.\,}x\in\Gamma,
	\end{align*}
	for all $s \in \R$, for all $\xi \in \R^N$ and with positive constants $a_j, j\in \{1,\ldots,6\}$, $b_\ell, \ell \in \{1,2,3\}$ and $c_k, k\in \{1,2\}$.
\end{enumerate}

We say that $ u\in W^{1,p(\cdot)}(\Omega) $ is a weak solution of \eqref {N} if
\begin{equation}\label{def_sol_N}
	\int_{\Omega}\mathcal{A}(x,u,\nabla u)\cdot \nabla \varphi \diff x
	= \int_{\Omega}\mathcal{B}(x,u, \nabla u)\varphi \diff x+\int_{\Gamma}\mathcal{C}(x,u)\varphi \diff\sigma
\end{equation}
holds for all test functions $\varphi\in W^{1,p(\cdot)}(\Omega)$, where $\diff 
\sigma$ denotes the usual $(N-1)$-dimensional surface measure. By Propositions \ref{embedding_critical} and \ref{embedding_critical_boundary} along with the hypotheses H(N), H(p2)  and \eqref{inclusions} we verify that the definition of a weak solution of problem \eqref{N} stated in \eqref{def_sol_N} is well-defined.

The main result in this section is the following one about the boundedness of solutions of problem \eqref{N}.
\begin{theorem} \label{Theo.N}
	Let hypotheses H(N) and H(p2) be satisfied. Then, any weak solution of problem \eqref{N} is of class $L^\infty (\Omega)\cap L^\infty (\Gamma)$.
\end{theorem}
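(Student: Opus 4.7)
The plan is to mirror the De Giorgi iteration scheme of Theorem \ref{Theo.D} while enlarging the sub-level functional $Z_n$ to account for the boundary. Using hypothesis H(p2), Proposition \ref{embedding_critical_boundary} and the inclusions \eqref{inclusions}, cover $\overline{\Omega}$ by finitely many balls $B_i$ of sufficiently small radius $R$ so that each $\Omega_i:=B_i\cap\Omega$ is Lipschitz and
\begin{align*}
p_i^+:=\max_{\overline{B}_i\cap\close}p<\min\bigl\{(p^*)_i^-,(p_*)_i^-\bigr\},
\end{align*}
where $(p_*)_i^-:=\min_{\overline{B}_i\cap \Gamma}p_*$ when $\overline{B}_i$ meets $\Gamma$. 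Fix a weak solution $u$, let $A_\kappa:=\{x\in\Omega:u(x)>\kappa\}$ and $A^\Gamma_\kappa:=\{x\in\Gamma:u(x)>\kappa\}$, and define the level sequence $\kappa_n:=\kappa_*(2-2^{-n})$ together with the enlarged functional
\begin{align*}
Z_n:=\int_{A_{\kappa_n}}|\nabla u|^{p(x)}\diff x+\int_{A_{\kappa_n}}(u-\kappa_n)^{p^*(x)}\diff x+\int_{A^\Gamma_{\kappa_n}}(u-\kappa_n)^{p_*(x)}\diff\sigma.
\end{align*}
Choose $\kappa_*\geq 1$ so large (via dominated convergence) that $Z_0<1$ and the smallness assumption from Lemma \ref{leRecur} holds.

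Next I would establish two claims as in the Dirichlet proof. \textbf{Claim 1:} by splitting the level-set integrals into the pieces $\Omega_i$ and $\Gamma_i:=\Gamma\cap\overline{B}_i$, applying the local Sobolev embedding $W^{1,p(\cdot)}(\Omega_i)\hookrightarrow L^{p^*(\cdot)}(\Omega_i)$ from Proposition \ref{embedding_critical} and the local trace embedding $W^{1,p(\cdot)}(\Omega_i)\hookrightarrow L^{p_*(\cdot)}(\Gamma_i)$ from Proposition \ref{embedding_critical_boundary}, combined with Proposition \ref{norm-modular} and the Chebyshev-type bound on $|A_{\kappa_{n+1}}|$ used in \eqref{|A_{k_{n+1}}|} (together with an analogous bound on $|A^\Gamma_{\kappa_{n+1}}|$), I expect
\begin{align*}
\int_{A_{\kappa_{n+1}}}(u-\kappa_{n+1})^{p^*(x)}\diff x+\int_{A^\Gamma_{\kappa_{n+1}}}(u-\kappa_{n+1})^{p_*(x)}\diff\sigma\leq C_1 2^{c_0 n}\bigl(Z_n^{1+\mu_1}+Z_n^{1+\mu_2}\bigr),
\end{align*}
with $\mu_1,\mu_2>0$ controlled by $\min_i (p^*)_i^-/p_i^+$ and $\min_i (p_*)_i^-/p_i^+$. \textbf{Claim 2:} testing \eqref{def_sol_N} with $\varphi=(u-\kappa_{n+1})_+\in W^{1,p(\cdot)}(\Omega)$, applying (A2) on the left, and handling the $\mathcal{B}$ term by (B) with Young's inequality and a small parameter $\eps=\min\{1,a_4/(2b_1)\}$ exactly as in the Dirichlet argument, the new boundary contribution is estimated through (C) by
\begin{align*}
\int_{A^\Gamma_{\kappa_{n+1}}}|\mathcal{C}(x,u)|(u-\kappa_{n+1})\diff\sigma\leq (c_1+c_2)\int_{A^\Gamma_{\kappa_{n+1}}}u^{p_*(x)}\diff\sigma,
\end{align*}
using $u\geq \kappa_{n+1}\geq 1$ on $A^\Gamma_{\kappa_{n+1}}$ to absorb the constant $c_2$ term. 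Combined with the pointwise comparison $u\leq (2^{n+2}-1)(u-\kappa_n)$ on $A_{\kappa_{n+1}}$ and on $A^\Gamma_{\kappa_{n+1}}$, this gives
\begin{align*}
\int_{A_{\kappa_{n+1}}}|\nabla u|^{p(x)}\diff x\leq C_2 2^{n(p^*)^+}\!\int_{A_{\kappa_n}}(u-\kappa_n)^{p^*(x)}\diff x+C_2 2^{n(p_*)^+}\!\int_{A^\Gamma_{\kappa_n}}(u-\kappa_n)^{p_*(x)}\diff\sigma.
\end{align*}

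Adding Claims 1 and 2 yields a recursion $Z_{n+1}\leq C_3 b^n(Z_{n-1}^{1+\mu_1}+Z_{n-1}^{1+\mu_2})$ with some $b>1$ independent of $n$ and $\kappa_*$. Splitting into even and odd subsequences, exactly as in \eqref{Recur1}--\eqref{Recur2} of the Dirichlet proof, and invoking Lemma \ref{leRecur} (the smallness of $Z_0$ being ensured by choosing $\kappa_*$ large), I conclude $Z_n\to 0$. This forces
\begin{align*}
\int_\Omega (u-2\kappa_*)_+^{p^*(x)}\diff x=0\quad\text{and}\quad\int_\Gamma(u-2\kappa_*)_+^{p_*(x)}\diff\sigma=0,
\end{align*}
so $u\leq 2\kappa_*$ almost everywhere in $\Omega$ and on $\Gamma$. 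Running the same argument with $-u$ in place of $u$ controls the negative part, giving $\|u\|_{L^\infty(\Omega)}+\|u\|_{L^\infty(\Gamma)}\leq 4\kappa_*$.

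The main obstacle I foresee is setting up the \emph{simultaneous} localization so that both critical exponents $p^*$ and $p_*$ are subject to the same ball decomposition: the ratio $(p^*)_i^-/p_i^+$ yielded the exponent $1+\mu$ in the Dirichlet case, and here I need $(p_*)_i^-/p_i^+>1$ on every patch meeting $\Gamma$ as well. This is precisely what the stronger regularity hypothesis H(p2) together with the log-H\"older control inherited from \eqref{inclusions} affords, and it is the reason Proposition \ref{embedding_critical_boundary} had to be stated without the restriction $p^+<N$.
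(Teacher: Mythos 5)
Your proposal reproduces, up to reorganization, the paper's own proof: you localize with a finite cover of small balls so that both $(p^*)_i^-/p_i^+>1$ and $(p_*)_i^-/p_i^+>1$, augment the De Giorgi functional $Z_n$ with the boundary integral $\int_{\Gamma_{\kappa_n}}(u-\kappa_n)^{p_*(x)}\diff\sigma$, bound the interior and boundary level-set terms via Propositions \ref{embedding_critical} and \ref{embedding_critical_boundary} respectively, bound the gradient term by testing with $(u-\kappa_{n+1})_+$ and absorbing the $\mathcal{C}$-term via (C), and close the argument by the even/odd splitting of the two-step recursion plus Lemma \ref{leRecur}. The only cosmetic difference is that you merge the paper's Claims 1 and 2 into a single claim and mention a bound on the surface measure $|A^\Gamma_{\kappa_{n+1}}|$, which is actually unnecessary because the trace estimate reduces the boundary integral to interior quantities where only $|A_{\kappa_{n+1}}|$ is needed.
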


\begin{proof}
	As before, since $\overline{\Omega}$ is compact, for any $R>0$, there exists a finite open cover $\{B_i(R)\}_{i=1}^m$ of balls $B_i:=B_i(R)$ with radius $R$ such that $\overline{\Omega} \subset \bigcup\limits_{i=1}^{m}B_{i} $ and each $\Omega_i:=B_i\cap\Omega$ ($i=1,\cdots,m$) is a Lipschitz domain as well. We denote by $I$ the set of all $i\in\{1,\cdots,m\}$ such that $B_i\cap \Gamma\ne\emptyset$ and take $R>0$ sufficiently small such that
	\begin{equation*}
		p_i^+:=\max_{x\in\overline{B}_{i} \cap  \close} p(x) <{(p^*)}^{-}_{i}:=\min_{x\in \overline{B}_{i} \cap  \close} p^*(x)\quad \text{for all }i\in \{1,\cdots,m\}
	\end{equation*}
	and
	\begin{equation}\label{N.loc.exp2}
		p_i^+<{(p_*)}^{-}_{i}:=\min_{x\in \overline{B_i}\cap \Gamma} p_*(x)\quad \text{for all }i\in I.
	\end{equation}
	Let $u$ be a weak solution to problem~\eqref{N} and let $\kappa_*\geq 1$ be sufficiently large such that
	\begin{equation}\label{N.k*}
		\int_{A_{\kappa_*}}|\nabla u|^{p(x)}\diff x+\int_{A_{\kappa_*}}u^{p^*(x)}\diff x+\int_{\Gamma_{\kappa_*}}u^{p_*(x)}\diff\sigma<1,
	\end{equation}
	where
	\begin{align*}
		A_\kappa:=\{x\in\Omega \, : \, u(x)>\kappa\} \quad\text{and}\quad \Gamma_k:=\{x\in\Gamma \,:\, u(x)>\kappa\} \quad\text{for }k\in\R.
	\end{align*}
	We define
	\begin{equation}\label{N.Zn.def}
		Z_n:=\int_{A_{\kappa_n}}|\nabla u|^{p(x)}\diff x+\int_{A_{\kappa_n} }(u-\kappa_n)^{p^*(x)}\diff x+\int_{\Gamma_{\kappa_n}}(u-\kappa_n)^{p_*(x)}\diff\sigma,
	\end{equation}
	where $\{\kappa_{n}\}_{n\in\N_0}$ is defined as in the proof of Theorem \ref{Theo.D}. Since $\kappa_{n}<\kappa_{n+1}$, we have $A_{\kappa_{n+1}}\subset A_{\kappa_{n}}$ and $\Gamma_{\kappa_{n+1}}\subset \Gamma_{\kappa_{n}}$ for all $n\in\N_0$. Hence, it holds
	\begin{equation}\label{N.Zn.decreasing}
	Z_{n+1}\leq Z_n\quad\text{for all }n\in \N_0.
	\end{equation}
	Moreover, we have the following estimates (see the proof of Theorem \ref{Theo.D})
	\begin{align}
		u(x)&\leq \l(2^{n+2}-1\r)(u(x)-\kappa_{n})\quad \text{for a.\,a.\,}x\in A_{\kappa_{n+1}} \text{ and for all } n\in\N_0,\label{N.est.u1}\\
		u(x)&\leq \l(2^{n+2}-1\r)(u(x)-\kappa_{n})\quad \text{for a.\,a.\,}x\in \Gamma_{\kappa_{n+1}}\text{ and for all } n\in\N_0. \label{N.est.u2}
	\end{align}
	As shown in \eqref{|A_{k_{n+1}}|} the following estimate holds true
	\begin{equation} \label{N.|A_k|}
	|A_{\kappa_{n+1}}| \leq \frac{(p^*)^+2^{(n+1)}}{\kappa_*^{(p^*)^{-}}} Z_n\leq 2^{(n+1)(p^*)^+} Z_n\quad \text{for all }n\in\N_0.
	\end{equation}
	In the following we will denote by $C_i$ ($i\in\N$) positive constants which are independent of $n$ and $\kappa_*$.
	
	{\bf Claim 1:} It holds that
	\begin{equation*}
		\int_{A_{\kappa_{n+1}}}(u-\kappa_{n+1})^{p^*(x)}\diff x\leq  C_1 2^{\frac{n\left((p^*)^+\right)^2}{p^-}}\left(Z_{n}^{1+\nu_1}+Z_{n}^{1+\nu_2}\right)\quad \text{for all } n\in\N_0,
	\end{equation*}
	where
	\begin{align*}
		0<\nu_1:=\min_{1\leq i\leq m} \frac{(p^*)_i^-}{p_i^+}-1\leq\nu_2:=\max_{1\leq i\leq m} \frac{(p^*)_i^-}{p_i^+}-1.
	\end{align*}

	The proof of Claim 1 is the same as Claim 1 in Theorem \ref{Theo.D}.
	
	{\bf Claim 2:} There exist positive constants $\nu_3,\nu_4$ such that
	\begin{equation*}
		\int_{\Gamma_{\kappa_{n+1}}}(u-\kappa_{n+1})^{p_*(x)}\diff \sigma\leq  C_2 2^{\frac{n\left((p^*)^+\right)^2}{p^-}}\left(Z_{n}^{1+\nu_3}+Z_{n}^{1+\nu_4}\right)\quad \text{for all } n\in\N_0.
	\end{equation*}
	
	First, we see that
	\begin{align}\label{N.deco.Bdr}
		\begin{split}
			\int_{\Gamma_{\kappa_{n+1}}}(u-\kappa_{n+1})^{p_*(x)}\diff \sigma
			&=\int_{\Gamma}(u-\kappa_{n+1})_+^{p_*(x)}\diff \sigma\\
			& \leq \sum_{i\in I}\int_{\Gamma\cap \partial \Omega_{i}}(u-\kappa_{n+1})_+^{p_*(x)}\diff \sigma.
		\end{split}
	\end{align}
	Let $i\in I$. From Proposition \ref{norm-modular}, \eqref{N.k*} and  Proposition \ref{embedding_critical_boundary} for $\Omega=\Omega_{i}$ we have
	\begin{align*}
		& \int_{\Gamma\cap \partial \Omega_{i}}(u-\kappa_{n+1})_+^{p_*(x)}\diff \sigma\\
		& \leq \|(u-\kappa_{n+1})_+\|_{L^{p_*(\cdot)}(\Gamma\cap \partial \Omega_{i})}^{(p_*)_i^-}\\
		& \leq \|(u-\kappa_{n+1})_+\|_{L^{p_*(\cdot)}(\partial \Omega_{i})}^{(p_*)_i^-}\\
		&\leq C_3\left[\|\nabla (u-\kappa_{n+1})_+\|_{L^{p(\cdot)}(\Omega_{i})}+\|(u-\kappa_{n+1})_+\|_{L^{p(\cdot)}(\Omega_{i})}\right]^{(p_*)_i^-}.
	\end{align*}
	Then, by Proposition \ref{norm-modular}, \eqref{equi.norms} and \eqref{N.k*} it follows
	\begin{align*}
		& \int_{\Gamma\cap \partial \Omega_{i}}(u-\kappa_{n+1})_+^{p_*(x)}\diff \sigma\\
		&\leq C_4\left(\int_{\Omega_{i}}|\nabla (u-\kappa_{n+1})_+|^{p(x)}\diff x+\int_{\Omega_{i}}(u-\kappa_{n+1})_+^{p(x)}\diff x\right)^{\frac{(p_*)_i^-}{p_i^+}}\\
	\notag&\leq C_4\left(\int_{A_{\kappa_{n+1}}}|\nabla u|^{p(x)}\diff x+\int_{A_{\kappa_{n+1}}}(u-\kappa_{n+1})^{p^*(x)}\diff x+|A_{\kappa_{n+1}}|\right)^{\frac{(p_*)_i^-}{p_i^+}}.
	\end{align*}
	From this combined with \eqref{N.Zn.def}, \eqref{N.Zn.decreasing} as well as \eqref{N.|A_k|} we obtain
	\begin{align}\label{N.grad3}
		\int_{\Gamma\cap \partial \Omega_{i}}(u-\kappa_{n+1})_+^{p_*(x)}\diff \sigma&\leq C_52^{\frac{n(p^*)^+(p_*)_i^-}{p_i^+}}Z_{n}^{\frac{(p_*)_i^-}{p_i^+}}.
	\end{align}
	From \eqref{N.deco.Bdr} and \eqref{N.grad3} we conclude that
	\begin{equation*}
		\int_{\Gamma}(u-\kappa_{n+1})_+^{p_*(x)}\diff \sigma\leq C_62^{\frac{n\left((p^*)^+\right)^2}{p^-}}\left(Z_{n}^{1+\nu_3}+Z_{n}^{1+\nu_4}\right),
	\end{equation*}
	where
	\begin{align*}
		0<\nu_3:=\min_{i\in I} \frac{(p_*)_i^-}{p_i^+}-1\leq\nu_4:=\max_{i\in I} \frac{(p_*)_i^-}{p_i^+}-1;
	\end{align*}
	see \eqref{N.loc.exp2}. This proves Claim 2.
	
	{\bf Claim 3:} It holds that
	\begin{equation*}
		\int_{A_{\kappa_{n+1}}}|\nabla u|^{p(x)}\diff x\leq  C_7 2^{n\l[\frac{\left((p^*)^+\right)^2}{p^-}+(p^*)^+\r]}\left(Z_{n-1}^{1+\mu_1}+Z_{n-1}^{1+\mu_2}\right)\quad \text{for all }n\in\N,
	\end{equation*}
	where
	\begin{align*}
		0<\mu_1:=\min_{1\leq i\leq 4}\nu_i\leq \mu_2:=\max_{1\leq i\leq 4}\nu_i.
	\end{align*}

	Taking $\varphi =(u-\kappa_{n+1})_{+} \in W^{1,p(\cdot)}(\Omega)$ as test function in \eqref{def_sol_N} gives
	\begin{align*}
		\int_{\Omega}
		\mathcal{A}(x,u,\nabla u) \cdot \nabla
		\varphi \diff x =\int_{\Omega }\mathcal{B}(x,u,\nabla u)\varphi \diff x+\int_{\Gamma}\mathcal{C}(x,u)\varphi \diff\sigma,
	\end{align*}
	which can be written as
	\begin{align}\label{estimate_3}
		\begin{split}
			&\int_{A_{\kappa_{n+1}}} \mathcal{A}(x,u,\nabla u) \cdot \nabla u \diff x\\ 
			&=\int_{A_{\kappa_{n+1}} }B(x,u,\nabla u)(u-\kappa_{n+1}) \diff x +\int_{\Gamma_{\kappa_{n+1}} }C(x,u)(u-\kappa_{n+1}) \diff \sigma.
		\end{split}
	\end{align}
	As done in the proof of Theorem \ref{Theo.D}, by using (A2) and (B), we have the estimates
	\begin{align}\label{estimate_4}
		\begin{split}
			&\int_{A_{\kappa_{n+1}}} \mathcal{A}(x,u,\nabla u)\cdot \nabla u \diff x\\
			&\geq a_4\int_{A_{\kappa_{n+1}} }|\nabla u|^{p(x)} \diff x-\max\{a_5,a_6\}\int_{A_{\kappa_{n+1}} }u^{p^*(x)}\diff x,
		\end{split}
	\end{align}
	and
	\begin{align}\label{estimate_5}
		\begin{split}
			&\int_{A_{\kappa_{n+1}}} \mathcal{B}(x,u,\nabla u)(u-\kappa_{n+1})\diff x\\
			&\leq  \eps b_1 \int_{A_{\kappa_{n+1}}}  |\nabla u|^{p(x)} \diff x + \l(b_1 \eps^{-((p^*)^+-1)}+C_7\r) \int_{A_{\kappa_{n+1}}}  u^{p^*(x)}\diff x
		\end{split}
	\end{align}
	with $\eps \in (0,1]$. Note that $u\geq u-\kappa_{n+1} >0$ and $u> \kappa_{n+1} \geq 1$ on $\Gamma_{\kappa_{n+1}}$. So, applying assumption (C) yields
	\begin{align}\label{estimate_6}
		\begin{split}
			\int_{\Gamma_{\kappa_{n+1}} }C(x,u)(u-\kappa_{n+1}) \diff\sigma
			& \leq c_1 \int_{\Gamma_{\kappa_{n+1}} }u^{p_*(x)} \diff\sigma+c_2 \int_{\Gamma_{\kappa_{n+1}} }1 \diff\sigma\\
			& \leq (c_1+c_2)\int_{\Gamma_{\kappa_{n+1}} }u^{p_*(x)} \diff\sigma.
		\end{split}
	\end{align}
	Combining \eqref{estimate_3}, \eqref{estimate_4}, \eqref{estimate_5} and \eqref{estimate_6}, taking $\eps=\min \{1,\frac{a_4}{2b_1}\}$ and using \eqref{N.est.u1} as well as \eqref{N.est.u2}, we obtain
	\begin{align*}
		&\int_{A_{\kappa_{n+1}}}|\nabla u|^{p(x)}\diff x\\
		& \leq  C_8\int_{A_{\kappa_{n+1}}}u^{p^*(x)}\diff x+C_8\int_{\Gamma_{\kappa_{n+1}}}u^{p_*(x)}\diff \sigma\\
		&\leq C_8\int_{A_{\kappa_{n+1}}}\left[\l(2^{n+2}-1\r)(u-\kappa_{n})\right]^{p^*(x)}\diff x\\
		&\qquad +C_8\int_{\Gamma_{\kappa_{n+1}}}\left[\l(2^{n+2}-1\r)(u-\kappa_{n})\right]^{p_*(x)}\diff \sigma.
	\end{align*}
	Hence,
	\begin{equation*}
		\int_{A_{\kappa_{n+1}}}|\nabla u|^{p(x)}\diff x
		\leq C_92^{n(p^*)^+}\left[\int_{A_{\kappa_{n}}}(u-\kappa_{n})^{p^*(x)}\diff x+\int_{\Gamma_{\kappa_n}}(u-\kappa_{n})^{p_*(x)}\diff \sigma\right].
	\end{equation*}
	Then, Claim 3 follows from the last inequality and Claims 1 and 2.
	
	From Claims 1, 2 and 3 along with \eqref{N.Zn.decreasing} one has
	\begin{equation}\label{N.Recur}
		Z_{n+1}\leq C_{10} b^n\left(Z_{n-1}^{1+\mu_1}+Z_{n-1}^{1+\mu_2}\right)\quad \text{for all } n\in\N,
	\end{equation}
	where
	\begin{align*}
		b:=2^{\l[\frac{\left((p^*)^+\right)^2}{p^-}+(p^*)^+\r]}>1.
	\end{align*}
	Repeating now the same arguments used in the proof of Theorem \ref{Theo.D}, by choosing $\kappa_*>1$ sufficiently large such that
	\begin{align*}
		&\int_{A_{\kappa_*}}|\nabla u|^{p(x)}\diff x+\int_{A_{\kappa_*} }u^{p^*(x)}\diff x+\int_{\Gamma_{\kappa_*} }u^{p_*(x)}\diff \sigma\\
		&\leq \min\left\{1,(2\tilde{b}C_{10})^{-\frac{1}{\mu_1}}\ \tilde{b}^{-\frac{1}{\mu_1^{2}}},\left(2\tilde{b}C_{10}\right)^{-\frac{1}{\mu_2}}\ \tilde{b}^{-\frac{1}{\mu_1\mu_2}-\frac{\mu_2-\mu_1}{\mu_2^{2}}}\right\},
	\end{align*}
	where $\tilde{b}:=b^2$, we deduce from \eqref{N.Recur} that
	\begin{align*}
		Z_n=\int_{A_{\kappa_{n}}}|\nabla u|^{p(x)}\diff x+\int_{A_{\kappa_{n}} }(u-\kappa_{n})^{p^*(x)}\diff x+\int_{\Gamma_{\kappa_{n}} }(u-\kappa_{n})^{p_*(x)}\diff \sigma\to 0
	\end{align*}
	as $n\to \infty$. This implies that
	\begin{align*}
		\int_{\Omega}(u-2\kappa_*)_+^{p^*(x)} \diff x+\int_{\Gamma}(u-2\kappa_*)_+^{p_*(x)} \diff \sigma=0.
	\end{align*}
	Therefore, $(u-2\kappa_*)_{+}=0$ a.\,e.\,in $\Omega$ and $(u-2\kappa_*)_{+}=0$ a.\,e.\,on $\Gamma$. This means that
	\begin{align*}
		\underset{\Omega}{\mathop{\rm ess\,sup}}\ u +\underset{\Gamma}{\mathop{\rm ess\,sup}}\ u\leq 4\kappa_*.
	\end{align*}

	Replacing $u$ by $-u$ in the arguments above we can show in the same way that
	\begin{align*}
		\underset{\Omega}{\mathop{\rm ess\,sup}}\ (-u) +\underset{\Gamma}{\mathop{\rm ess\,sup}}\ (-u)\leq 4\kappa_*.
	\end{align*}
	Hence,
	\begin{align*}
		\|u\|_{L^\infty(\Omega)}+\|u\|_{L^\infty(\Gamma)}\leq 4\kappa_*.
	\end{align*}
	The proof is finished.
\end{proof}

\section{The H\"older continuity}\label{section_5}

In this section we are concerning with the H\"older continuity of the solutions of the  problems \eqref{D} and \eqref{N}, respectively. In order to prove this we use the results from Sections \ref{section_3} and \ref{section_4}, respectively, along with the work of Fan-Zhao \cite{Fan-Zhao-1999}. A direct consequence is the H\"older continuity of the gradients of the solutions based on the paper of Fan \cite{Fan-2007}. To be more precise, we are going the prove the following results.

\begin{theorem}\label{H.D}
	Let hypotheses H(D) and H(p1) be satisfied. Then, any weak solution of problem \eqref{D} is of class $C^{0,\alpha}(\close)$ for some $\alpha\in (0,1]$.
\end{theorem}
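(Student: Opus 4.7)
The plan is to reduce problem \eqref{D} to the subcritical setting treated in Fan-Zhao \cite{Fan-Zhao-1999} by exploiting the $L^\infty$-bound already established in Theorem \ref{Theo.D}. The core idea is that once $u$ is known to be bounded, the terms involving $|u|^{p^*(x)}$ in (A1), (A2), (B) collapse to additive constants, and the residual growth in the gradient for the right-hand side turns out to be strictly below $|\nabla u|^{p(x)}$.

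First I would apply Theorem \ref{Theo.D} to obtain $M:=\|u\|_\infty<\infty$. Then I view $u$ as a weak solution of
\[
-\divergenz \tilde{\mathcal{A}}(x,\nabla u)=\tilde{\mathcal{B}}(x,\nabla u)\quad\text{in }\Omega,\qquad u=0\quad\text{on }\Gamma,
\]
where $\tilde{\mathcal{A}}(x,\xi):=\mathcal{A}(x,u(x),\xi)$ and $\tilde{\mathcal{B}}(x,\xi):=\mathcal{B}(x,u(x),\xi)$. Using $|u(x)|\leq M$ a.\,e.\ in $\Omega$, the hypotheses (A1), (A2), (B) specialize to
\begin{align*}
|\tilde{\mathcal{A}}(x,\xi)|&\leq a_1|\xi|^{p(x)-1}+\tilde{a}_1,\\
\tilde{\mathcal{A}}(x,\xi)\cdot \xi &\geq a_4|\xi|^{p(x)}-\tilde{a}_2,\\
|\tilde{\mathcal{B}}(x,\xi)|&\leq b_1|\xi|^{\sigma(x)}+\tilde{b}_1,
\end{align*}
with constants $\tilde{a}_1,\tilde{a}_2,\tilde{b}_1$ depending only on $M$ and the original data, and with $\sigma(x):=p(x)-p(x)/p^*(x)$. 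Since $p^*\in C(\close)$ is finite with $p^*(x)>p(x)$ on $\close$, the compactness of $\close$ yields a uniform $\delta>0$ such that $\sigma(x)\leq p(x)-\delta$, so the reduced problem has strictly subcritical gradient growth.

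Finally, I would invoke the H\"older continuity theorem from Fan-Zhao \cite{Fan-Zhao-1999} applied to this reduced problem: its structural hypotheses are exactly matched by the three inequalities above together with the log-H\"older regularity H(p1) on $p$, and $u\in W_0^{1,p(\cdot)}(\Omega)\cap L^\infty(\Omega)$ is an admissible weak solution. This produces $\alpha\in(0,1]$ with $u\in C^{0,\alpha}(\close)$. The main point to verify carefully is the matching of hypotheses: one must check that the Fan-Zhao framework admits precisely the asymmetric coercivity/growth pair $\big(|\xi|^{p(x)-1}, |\xi|^{p(x)}\big)$ for the principal part and subcritical gradient growth $|\xi|^{\sigma(x)}$ for the source, and that the constants $\tilde{a}_i,\tilde{b}_1$ (which may be large) do not interfere with the qualitative H\"older estimate. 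All the remaining ingredients---boundedness of $u$, continuity of $p$, Lipschitz character of $\Gamma$---are already in place.
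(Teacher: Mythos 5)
Your high-level strategy---first invoke Theorem \ref{Theo.D} to get $\|u\|_\infty\le M_0$, then hand the problem to Fan--Zhao \cite{Fan-Zhao-1999}---is indeed what the paper does, but the specific route you propose has a real soft spot in the middle. The paper does \emph{not} fold the boundedness into new Carath\'eodory functions and then apply a Fan--Zhao theorem for weak solutions of a subcritical equation. Instead, it proves a Caccioppoli-type estimate directly from (A1), (A2), (B) together with the bound $|u|\le M_0$ (Lemma \ref{H.Cac.D}, and its boundary analogue Lemma \ref{H.Cac.N}), thereby showing that $u$ lies in the relevant De Giorgi class, and then invokes Theorems 2.1 and 2.2 of \cite{Fan-Zhao-1999}. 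Those theorems take a Caccioppoli inequality as their hypothesis, not a PDE with structure conditions; they do not care about the growth exponents in $\mathcal{A}$ or $\mathcal{B}$ at all.

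The gap in your version is the claim that the residual gradient exponent $\sigma(x)=p(x)\frac{p^*(x)-1}{p^*(x)}$ makes the reduced problem ``strictly subcritical'' in Fan--Zhao's sense. Being strictly below $p(x)$ is not the relevant notion: the Fan--Zhao weak-solution machinery (which must also establish boundedness) needs the Young dual of the gradient exponent to land \emph{strictly} below $p^*(x)$, i.e.\ one needs $r(x)<p(x)\frac{p^*(x)-1}{p^*(x)}$ so that $\frac{p(x)}{p(x)-r(x)}<p^*(x)$. Your $\sigma(x)$ sits exactly at the critical threshold $p(x)\frac{p^*(x)-1}{p^*(x)}$ (its Young dual is exactly $p^*(x)$), so the reduced operator $\tilde{\mathcal{B}}$ does not verify the subcritical hypotheses of the theorem you want to cite. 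What saves the day is that once $u$ is bounded, the critical Young dual $u^{p^*(x)}$ is just a constant $\le M_0^{p^*(x)}$, and the Caccioppoli inequality can be derived under the much weaker condition $\sigma(x)<p(x)$; see the estimate \eqref{test.Est3''} in the proof of Lemma \ref{H.Cac.D}. That is precisely why the paper routes through the De Giorgi-class form of Fan--Zhao's results rather than a black-box theorem for subcritical weak solutions. To fix your argument: do not try to shoehorn the reduced problem into a ``subcritical equation'' theorem. Either prove the Caccioppoli inequality yourself (exactly as Lemma \ref{H.Cac.D}), or, if you insist on citing a weak-solution theorem from \cite{Fan-Zhao-1999}, you must check that it can accept boundedness of the solution as a hypothesis and then only requires $\sigma(x)<p(x)$; as stated you are implicitly using a stronger subcriticality that the reduced $\tilde{\mathcal{B}}$ does not have. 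You should also note that for the boundary regularity one needs either $u|_\Gamma$ H\"older continuous or the boundary version of the De Giorgi class estimate; in the Dirichlet case $u=0$ on $\Gamma$ gives this for free, which is implicitly used when applying Fan--Zhao's Theorem 2.2.
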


\begin{theorem}\label{H.N}
	Let hypotheses H(N) and H(p2) be satisfied. Then, for any weak solution $u$ of problem~\eqref{N} it holds that $u\in C^{0,\alpha}(\Omega)$ for some $\alpha\in (0,1)$. If in addition $u\in C^{0,\beta_1}(\Gamma)$ for some $\beta_1 \in (0,1)$, then $u\in C^{0,\beta_2}(\close)$ for some $\beta_2\in (0,1)$.
\end{theorem}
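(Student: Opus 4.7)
The first step is to use the boundedness Theorem~\ref{Theo.N} to reduce the critical-growth problem to a subcritical one to which the existing H\"older theory applies. Since $u\in L^{\infty}(\Omega)\cap L^{\infty}(\Gamma)$, setting $M:=\|u\|_{L^{\infty}(\Omega)}+\|u\|_{L^{\infty}(\Gamma)}$ and evaluating H(N) at $(x,u(x),\nabla u(x))$, the terms of order $|s|^{p^{*}(x)-1}$, $|s|^{p^{*}(x)\frac{p(x)-1}{p(x)}}$, $|s|^{p^{*}(x)}$ and $|s|^{p_{*}(x)-1}$ become absolute constants depending only on $M$ and the structure constants. Thus $u$ is a bounded weak solution of a Neumann problem with effective structure
\begin{align*}
	|\mathcal{A}(x,u,\nabla u)|&\leq a_{1}|\nabla u|^{p(x)-1}+\tilde{a},\\
	\mathcal{A}(x,u,\nabla u)\cdot\nabla u&\geq a_{4}|\nabla u|^{p(x)}-\tilde{a}',\\
	|\mathcal{B}(x,u,\nabla u)|&\leq b_{1}|\nabla u|^{p(x)\frac{p^{*}(x)-1}{p^{*}(x)}}+\tilde{b},\\
	|\mathcal{C}(x,u)|&\leq \tilde{c}.
\end{align*}
Because $p(x)\frac{p^{*}(x)-1}{p^{*}(x)}=p(x)-\frac{p(x)}{p^{*}(x)}<p(x)$, this effective problem has strictly subcritical $p(\cdot)$-growth in $\nabla u$, and it falls inside the framework of Fan--Zhao \cite{Fan-Zhao-1999}.

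Next I address the first assertion. Hypothesis H(p2) combined with the inclusions \eqref{inclusions} implies that $p$ is $C^{0,1-N/\gamma}(\close)$-H\"older and in particular log-H\"older continuous, which is exactly the regularity of $p$ used in Fan--Zhao. Applying their interior De Giorgi/Moser iteration to the bounded problem above yields, for every ball $B_{R}(x_{0})$ compactly contained in $\Omega$, an oscillation decay of the form
\begin{align*}
	\operatorname*{osc}_{B_{r}(x_{0})}u\leq C\left(\frac{r}{R}\right)^{\alpha}\operatorname*{osc}_{B_{R}(x_{0})}u+C\,R^{\alpha},\qquad 0<r\leq R,
\end{align*}
with $\alpha\in(0,1)$ and $C$ depending only on $N$, $p^{-}$, $p^{+}$, the log-H\"older modulus of $p$, $M$ and the structure constants. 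A standard Campanato-type argument then gives $u\in C^{0,\alpha}(\Omega)$.

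For the second assertion, suppose in addition $u|_{\Gamma}\in C^{0,\beta_{1}}(\Gamma)$. Fix $x_{0}\in\Gamma$ and, using the Lipschitz character of $\Gamma$, flatten it locally so that the relevant neighborhood becomes a half-ball $B_{R}^{+}(x_{0})$ with flat portion $T_{R}\subset\{x_{N}=0\}$. On $T_{R}$ the function $u$ has oscillation at most $c\,R^{\beta_{1}}$ by assumption. Now I would run the De Giorgi iteration on half-balls, choosing the truncation levels in accordance with this boundary oscillation and absorbing the boundary integral produced by the Neumann datum (which contributes only the bounded quantity $\tilde{c}$ from the first step above) into the Caccioppoli inequality. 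This yields a boundary oscillation estimate
\begin{align*}
	\operatorname*{osc}_{\Omega\cap B_{r}(x_{0})}u\leq C\left(\frac{r}{R}\right)^{\beta}\operatorname*{osc}_{\Omega\cap B_{R}(x_{0})}u+C\,R^{\min\{\beta,\beta_{1}\}},
\end{align*}
for some $\beta\in(0,1)$ depending only on the structural data. Combining this with the interior decay of the previous paragraph and covering $\close$ by a finite number of interior balls and boundary half-balls (possible by compactness of $\close$ and the Lipschitz regularity of $\Gamma$), one extracts a single exponent $\beta_{2}\in(0,1)$ for which $u\in C^{0,\beta_{2}}(\close)$.

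The main obstacle is the boundary step in the last paragraph. In contrast to Theorem~\ref{H.D}, where the Dirichlet condition $u=0$ on $\Gamma$ provides the boundary oscillation for free, here the trace $u|_{\Gamma}$ is not prescribed by the problem, which is precisely why the extra hypothesis $u|_{\Gamma}\in C^{0,\beta_{1}}(\Gamma)$ is needed; one must import this H\"older modulus into the truncation analysis on half-balls while simultaneously freezing $p(\cdot)$ consistently with its log-H\"older modulus and handling the Neumann term $\mathcal{C}(x,u)$ as a bounded boundary perturbation that is absorbed into the Caccioppoli inequality.
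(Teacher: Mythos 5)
Your proposal follows essentially the same route as the paper: use the $L^{\infty}$ bound from Theorem \ref{Theo.N} to tame the critical terms, prove Caccioppoli inequalities on interior balls and on balls meeting $\Gamma$, and then invoke Theorems 2.1 and 2.2 of Fan--Zhao \cite{Fan-Zhao-1999} for the oscillation decay. The paper carries out the Caccioppoli estimates explicitly in Lemmas \ref{H.Cac.D} and \ref{H.Cac.N} (handling the Neumann term via the $W^{1,1}(\Omega_R)\hookrightarrow L^1(\partial\Omega_R)$ trace inequality and Young's inequality rather than by boundary-flattening), whereas you only sketch them, but the overall argument is the same.
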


We will only prove Theorem \ref{H.N} since the proof of Theorem \ref{H.D} is similar and simpler.
Let hypotheses H(N) and H(p2) be satisfied and let $u\in W^{1,p(\cdot)}(\Omega)$ be a weak solution of problem \eqref{N}. From Theorem \ref{Theo.N} we know that $u\in L^\infty\left(\Omega\right)\cap L^\infty\left(\Gamma\right)$. We fix $x_0\in\overline{\Omega}$ and denote by $B_r$ the ball centered at $x_0$ with radius $r>0$. Moreover, we set
\begin{align*}
	\Omega_r:=B_r\cap \Omega \quad \text{and}\quad A(\kappa,r):=\{x\in \Omega_r\,:\, u(x)>\kappa\} \quad\text{for } \kappa \in\R
\end{align*}
and
\begin{align*}
	M_0:=\left\|u\right\|_{L^\infty\left(\Omega\right)}+\left\|u\right\|_{L^\infty\left(\Gamma\right)}.
\end{align*}
Further, we denote by $C$ a positive constant that depends only on $M_0$ and the data and $C$ can be different in different places.

The following Caccioppoli-type inequality is essential in the proof of the H\"older continuity based on De Giorgi's iteration method.

\begin{lemma}\label{H.Cac.D}
	Let $x_0\in\Omega$. Then it holds
	\begin{align*}
		\int_{A\left(\kappa,r\right)}\left|\nabla u\right|^{p(x)}\diff x \leq C\int_{A\left(\kappa,R\right)}\left(\frac{u-\kappa}{R-r}\right)^{p(x)}\diff x + C\left|A\left(\kappa,R\right)\right|,
	\end{align*}
	for arbitrary $0<r<R$ with $B_R\subset\Omega$ and $\kappa\geq \underset{B_R}{\esssup}\, u-2M_0$. The conclusion remains valid if we replace $u$ by $-u$.
\end{lemma}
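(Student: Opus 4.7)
The plan is a Caccioppoli-type argument using a cutoff-weighted test function, adapted to the variable exponent setting and to the critical growth of $\mathcal{B}$. Since $B_R \subset \Omega$, the entire computation will stay in the interior and the boundary integral in \eqref{def_sol_N} will disappear. First I would fix $\eta \in C_c^\infty(B_R)$ with $\eta \equiv 1$ on $B_r$, $0 \le \eta \le 1$, $|\nabla \eta| \le 2/(R-r)$, and set $q := \max_{\overline{B}_R} p(x)$. Testing \eqref{def_sol_N} with $\varphi = \eta^q (u-\kappa)_+ \in W^{1,p(\cdot)}_0(\Omega)$ and expanding $\nabla\varphi$ splits the identity into the ``good term'' $\int_{A(\kappa,R)} \eta^q \mathcal{A}(x,u,\nabla u)\cdot \nabla u$ and a cross term involving $\eta^{q-1}\nabla \eta\, (u-\kappa)_+$.

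Using (A2) on the good term extracts $a_4 \int \eta^q |\nabla u|^{p(x)}$; the contributions $a_5|u|^{p^*(x)} + a_6$ are uniformly bounded on $A(\kappa,R)$ (since $|u| \le M_0$ and $p^*$ is bounded on $\close$) and so contribute at most $C|A(\kappa,R)|$. For the cross term, (A1) yields three pieces. The main one, $a_1 |\nabla u|^{p(x)-1}\eta^{q-1}|\nabla \eta|(u-\kappa)_+$, is controlled by writing $\eta^{q-1} = \eta^{q(p(x)-1)/p(x)} \cdot \eta^{q/p(x)-1}$ (valid pointwise since $q \ge p(x)$) and invoking pointwise Young's inequality with exponents $p(x)/(p(x)-1)$ and $p(x)$; this produces $\epsilon \eta^q |\nabla u|^{p(x)} + C(\epsilon)|\nabla \eta|^{p(x)}(u-\kappa)_+^{p(x)}$, the second summand bounded by $C(\epsilon)[(u-\kappa)/(R-r)]^{p(x)}$. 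The remaining pieces $a_2|u|^{p^*(x)(p(x)-1)/p(x)} + a_3$ from (A1) are bounded via $|u|\le M_0$, and elementary Young's applied to $|\nabla\eta|(u-\kappa)_+$ produces at most $C\int [(u-\kappa)/(R-r)]^{p(x)} + C|A(\kappa,R)|$.

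The main obstacle is the right-hand side coming from $\mathcal{B}$, in particular the critical gradient term $b_1|\nabla u|^{p(x)(p^*(x)-1)/p^*(x)}\eta^q (u-\kappa)_+$. This is exactly where the hypothesis $\kappa \ge \esssup_{B_R} u - 2M_0$ is essential: it forces $(u-\kappa)_+ \le 2M_0$ on $A(\kappa,R)$, so this factor drops out as a constant and the exponent $\beta(x) := p(x)(p^*(x)-1)/p^*(x)$ is strictly subcritical with $p(x)-\beta(x) = p(x)/p^*(x) \ge p^-/(p^*)^+ > 0$ uniformly. Pointwise Young's inequality with conjugate exponents $p(x)/\beta(x)$ and $p(x)/(p(x)-\beta(x))$ then gives $\epsilon \eta^q|\nabla u|^{p(x)} + C(\epsilon)$, contributing $C(\epsilon)|A(\kappa,R)|$. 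The lower-order terms $b_2|u|^{p^*(x)-1}+b_3$ are bounded, and $(u-\kappa)_+\le 2M_0$ yields a further $C|A(\kappa,R)|$ bound.

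Choosing $\epsilon$ small enough that all $\epsilon\int \eta^q |\nabla u|^{p(x)}$ terms can be absorbed into the left-hand side, and using $\eta \equiv 1$ on $B_r$, finishes the proof. For the analogue with $u$ replaced by $-u$, I would apply the same argument to $v:=-u$, which is a weak solution of a problem with coefficients $\tilde{\mathcal{A}}(x,s,\xi) := -\mathcal{A}(x,-s,-\xi)$, $\tilde{\mathcal{B}}(x,s,\xi) := -\mathcal{B}(x,-s,-\xi)$, $\tilde{\mathcal{C}}(x,s) := -\mathcal{C}(x,-s)$; since H(N) is phrased in terms of $|s|$ and $|\xi|$, these satisfy the identical structural bounds and the same estimate carries over verbatim.
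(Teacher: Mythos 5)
Your proposal is correct and follows essentially the same route as the paper: both test the weak formulation with the cutoff-weighted truncation $\eta^{q}(u-\kappa)_+$ (the paper uses $q = p^+$, you use the local maximum, an immaterial variant), use that $\eta$ vanishes near $\Gamma$ to kill the boundary term, apply (A2) to extract the coercive term, and absorb the cross and right-hand-side terms via pointwise Young's inequality together with the bound $(u-\kappa)_+\leq 2M_0$ enforced by the level restriction on $\kappa$. Your explicit argument for the ``$-u$'' symmetry, which the paper leaves implicit, is also correct.
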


\begin{proof}
	Let $0<r<R$ with $B_R\subset\Omega$ and and let $\kappa\in\R$ be such that
	\begin{align*}
		\kappa\geq \underset{B_R}{\esssup}\, u-2M_0.
	\end{align*}
	Let $\eta\in C_c^\infty\left(\mathbb{R}^N\right)$ be such that	
	\begin{align}\label{P.Le4.3.eta}
		0\leq \eta\leq 1,\quad \operatorname{supp}(\eta)\subset B_R,\quad\eta\equiv 1\,\,\text{on}\,\,B_r\quad\text{and}\quad\left|\nabla\eta\right|\leq \frac{4}{R-r}.
	\end{align}
	Setting $v=\left(u-\kappa\right)_+$ and testing \eqref{def_sol_N} with $\varphi=v\eta^{p^+}$ gives
	\begin{align}\label{P.Le2.test}
		\begin{split}
			&\int_\Omega \left(\mathcal{A}\left(x,u,\nabla u\right)\cdot\nabla v\right)\eta^{p^+}\diff x\\
			&= -p^+\int_\Omega \left(\mathcal{A}\left(x,u,\nabla u\right)\cdot\nabla \eta \right)v\eta^{p^+-1}\diff x + \int_\Omega \mathcal{B}\left(x,u,\nabla u\right)v\eta^{p^+}\diff x.
		\end{split}
	\end{align}
	Here we have used the fact that $v\eta^{p^+}\in W_0^{1,p(\cdot)}(\Omega)$ and hence $\int_{\Gamma}\mathcal{C}(x,u)v\eta^{p^+} \diff\sigma  =0$. We now estimate each term on both sides of \eqref{P.Le2.test}. From assumption (A2) we have

	\begin{align}\label{test.Est1}
		\begin{split}
			&\int_\Omega \left(A\left(x,u,\nabla u\right)\cdot\nabla v\right)\eta^{p^+}\diff x\\
			&= \int_{A\left(\kappa,R\right)} \left(A\left(x,u,\nabla u\right)\cdot\nabla u\right)\eta^{p^+}\diff x\\
			&\geq  a_4\int_{A\left(\kappa,R\right)}\left|\nabla u\right|^{p(x)}\eta^{p^+}\diff x -a_5 \int_{A\left(\kappa,R\right)}|u|^{p^*(x)}\diff x -a_6 \int_{A\left(\kappa,R\right)} 1\diff x\\
			&\geq a_4\int_{A\left(\kappa,R\right)}\left|\nabla u\right|^{p(x)}\eta^{p^+}\diff x -(a_5+a_6)\left(M_0^{(p^*)^+}+2\right)|A\left(\kappa,R\right)|\\
			& \geq a_4\int_{A\left(\kappa,R\right)}\left|\nabla u\right|^{p(x)}\eta^{p^+}\diff x-C\left|A\left(\kappa,R\right)\right|.
		\end{split}
	\end{align}
	Now we can estimate the first term on the right hand side of \eqref{P.Le2.test} via hypothesis (A1). We obtain
	\begin{align}\label{test.Est2'}
		\begin{split}
			&-p^+\int_{A\left(\kappa,R\right)}\left(A\left(x,u,\nabla u\right)\cdot\nabla\eta\right)v\eta^{p^+-1}\diff x\\
			& \leq  p^+a_1\int_{A\left(\kappa,R\right)}\left|\nabla u\right|^{p(x)-1}\left|\nabla\eta\right|v\eta^{p^+-1}\diff x \\
			& \quad + p^+a_2\int_{A\left(\kappa,R\right)}\left|u\right|^{p^*(x)\frac{p(x)-1}{p(x)}}\left|\nabla \eta\right|v\eta^{p^+-1}\diff x\\
			& \quad + p^+a_3\int_{A\left(\kappa,R\right)}\left|\nabla \eta\right|v\eta^{p^+-1}\diff x.
		\end{split}
	\end{align}
	The first term on the right-hand side of \eqref{test.Est2'} can be estimated via Young's inequality in order to get
	\begin{align}\label{test.Est2''}
		\begin{split}
			&p^+a_1\int_{A\left(\kappa,R\right)}\left|\nabla u\right|^{p(x)-1}\left|\nabla\eta\right|v\eta^{p^+-1}\diff x\\
			&=p^+a_1\int_{A\left(\kappa,R\right)}\l(\frac{a_4 }{3a_1p^+}\r)^{\frac{p(x)-1}{p(x)}}\left|\nabla u\right|^{p(x)-1}\eta^{p^+-1} \\
			& \quad \times \l(\frac{a_4 }{3a_1p^+}\r)^{-\frac{p(x)-1}{p(x)}}\left|  \nabla\eta\right|v\diff x\\
			&\leq \frac{a_4}{3}\int_{A\left(\kappa,R\right)}\left|\nabla u\right|^{p(x)}\eta^{(p^+-1)\frac{p(x)}{p(x)-1}}\diff x+C\int_{A\left(\kappa,R\right)}\left|\nabla \eta\right|^{p(x)}v^{p(x)}\diff x\\
			&\leq \frac{a_4}{3}\int_{A\left(\kappa,R\right)}\left|\nabla u\right|^{p(x)}\eta^{p^+}\diff x+C\int_{A\left(\kappa,R\right)}\left|\nabla \eta\right|^{p(x)}v^{p(x)}\diff x.
		\end{split}
	\end{align}
	For the second and the third term on the right-hand side of \eqref{test.Est2'} we have
	\begin{align}\label{test.Est2'''}
		\begin{split}
			& p^+a_2\int_{A\left(\kappa,R\right)}\left|u\right|^{p^*(x)\frac{p(x)-1}{p(x)}}\left|\nabla \eta\right|v\eta^{p^+-1}\diff x\\
			& \leq p^+a_2\left(M_0^{\l(p^*(x)\frac{p(x)-1}{p(x)}\r)^+}+1\right) \int_{A\left(\kappa,R\right)}\left|\nabla\eta\right|v\diff x\\
			&\leq C\left|\nabla \eta\right|^{p(x)}v^{p(x)}\diff x+C|A\left(\kappa,R\right)|
		\end{split}
	\end{align}
	and
	\begin{align}\label{test.Est2''''}
		p^+a_3\int_{A\left(\kappa,R\right)}\left|\nabla \eta\right|v\eta^{p^+-1}\diff x \leq  C\int_{A\left(\kappa,R\right)}\left|\nabla \eta\right|^{p(x)}v^{p(x)}\diff x +C\left|A\left(\kappa,R\right)\right|.
	\end{align}
	Combining \eqref{test.Est2''}, \eqref{test.Est2'''} and \eqref{test.Est2''''} with \eqref{test.Est2'} leads to
	\begin{align}\label{test.Est2}
	\begin{split}
	&-p^+\int_{A\left(\kappa,R\right)}\left(A\left(x,u,\nabla u\right)\cdot\nabla\eta\right)v\eta^{p^+-1}\diff x\\
	& \leq \frac{a_4}{3}\int_{A\left(\kappa,R\right)}\left|\nabla u\right|^{p(x)}\eta^{p^+}\diff x +C\int_{A\left(\kappa,R\right)}\left|\nabla \eta\right|^{p(x)}v^{p(x)}\diff x + C\left|A\left(\kappa,R\right)\right|.
	\end{split}
	\end{align}

	We finally estimate the last term on the right hand side of \eqref{P.Le2.test}. It follows from hypothesis (B) that
	\begin{align}\label{test.Est3'}
		\begin{split}
			&\int_{\Omega}B\left(x,u,\nabla u\right)v\eta^{p^+}\diff x\\
			&=\int_{B_R}B\left(x,u,\nabla u\right)v\eta^{p^+}\diff x\\
			& \leq  b_1\int_{A\left(\kappa,R\right)}\left|\nabla u\right|^{p(x)\frac{p^*(x)-1}{p^*(x)}}v\eta^{p^+}\diff x + b_2\int_{A\left(\kappa,R\right)}\left|u\right|^{p^*(x)-1}v\eta^{p^+}\diff x\\
			&\quad + b_3|A\left(\kappa,R\right)|.
		\end{split}
	\end{align}
	Since $0<v\leq 2M_0$ on $A(\kappa,R)$ we get by applying Young's inequality
	\begin{align}\label{test.Est3''}
		\begin{split}
			&b_1\int_{A\left(\kappa,R\right)}\left|\nabla u\right|^{p(x)\frac{p^*(x)-1}{p^*(x)}}v\eta^{p^+}\diff x\\
			& \leq  2b_1M_0\int_{A\left(\kappa,R\right)}\left|\nabla u\right|^{p(x)\frac{p^*(x)-1}{p^*(x)}}\eta^{p^+}\diff x \\
			& =  2b_1M_0\int_{A\left(\kappa,R\right)} \l(\frac{a_4}{6b_1M_0}\r)^{\frac{p^*(x)-1}{p^*(x)}}\left|\nabla u\right|^{p(x)\frac{p^*(x)-1}{p^*(x)}}\eta^{p^+}\\
			&\quad\times\l(\frac{a_4}{6b_1M_0}\r)^{-\frac{p^*(x)-1}{p^*(x)}}\diff x \\
			&\leq \frac{a_4}{3}\int_{A\left(\kappa,R\right)}\left|\nabla u\right|^{p(x)}\eta^{p^+}\diff x + C\left|A\left(\kappa,R\right)\right|
		\end{split}
	\end{align}
	and
	\begin{equation}\label{test.Est3'''}
		b_2\int_{A\left(\kappa,R\right)}\left|u\right|^{p^*(x)-1}v\eta^{p^+}\diff x\leq 2b_2 (M_0+1)^{(p^*)^+}\left|A\left(\kappa,R\right)\right|.
	\end{equation}
	From \eqref{test.Est3'}, \eqref{test.Est3''} and \eqref{test.Est3'''} we obtain
	\begin{equation}\label{test.Est3}
	\int_{\Omega}B\left(x,u,\nabla u\right)v\eta^{p^+}\diff x
	\leq  \frac{a_4}{3}\int_{A\left(\kappa,R\right)}\left|\nabla u\right|^{p(x)}\eta^{p^+}\diff x + C\left|A\left(\kappa,R\right)\right|.
	\end{equation}
	Combining \eqref{P.Le2.test}, \eqref{test.Est1}, \eqref{test.Est2} and \eqref{test.Est3}, we arrive at
	\begin{align*}
		\int_{A\left(\kappa,R\right)}\left|\nabla u\right|^{p(x)} \eta^{p^+}\diff x
		\leq C\int_{A\left(\kappa,R\right)}\left|\nabla \eta\right|^{p(x)}v^{p(x)}\diff x + C\left|A\left(\kappa,R\right)\right|.
	\end{align*}
	From this and \eqref{P.Le4.3.eta} we conclude that	
	\begin{align*}
		\int_{A\left(\kappa,r\right)}\left|\nabla u\right|^{p(x)}\diff x \leq C\int_{A\left(\kappa,R\right)}\left(\frac{u-\kappa}{R-r}\right)^{p(x)}\diff x + C\left|A\left(\kappa,R\right)\right|.
	\end{align*}
	The proof is complete.
\end{proof}

Similarly we have the following result.

\begin{lemma}\label{H.Cac.N}
	Let $x_0\in \close$. Then it holds
	\begin{align}\label{Le.4.4.Ineq}
		\int_{A\left(\kappa,r\right)}\left|\nabla u\right|^{p(x)}\diff x \leq C\int_{A\left(\kappa,R\right)}\left(\frac{u-\kappa}{R-r}\right)^{p(x)}\diff x + C\left|A\left(\kappa,R\right)\right|,
	\end{align}
	for arbitrary $0<r<R$ with $B_R\cap \Gamma\ne\emptyset$ and $\kappa\geq \max\big\{\underset{\Omega_R}{\esssup}\,u-2M_0, \underset{\Gamma_R}{\esssup}\, u\big\}$, where $\Gamma_R:=\Gamma\cap B_R$. The conclusion remains valid if we replace $u$ by $-u$.
\end{lemma}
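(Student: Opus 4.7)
The plan is to mimic the proof of Lemma \ref{H.Cac.D} almost verbatim, and to exploit the extra assumption $\kappa \geq \esssup_{\Gamma_R} u$ to kill the boundary contribution. Let $0<r<R$ with $B_R\cap\Gamma\neq\emptyset$ and fix $\kappa$ as in the hypothesis. Choose a cutoff $\eta\in C_c^\infty(\R^N)$ satisfying the four properties in \eqref{P.Le4.3.eta}, and set $v:=(u-\kappa)_+$. Since $u\in W^{1,p(\cdot)}(\Omega)\cap L^\infty(\Omega)$, the test function $\varphi:=v\eta^{p^+}$ belongs to $W^{1,p(\cdot)}(\Omega)$ and is admissible in \eqref{def_sol_N}.

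The crucial observation is the boundary term. Because $\supp\eta\subset B_R$, the set $\{x\in\Gamma : v(x)\eta(x)^{p^+}\neq 0\}$ is contained in $\Gamma_R=\Gamma\cap B_R$. By the assumption $\kappa\geq \esssup_{\Gamma_R}u$ we have $v\equiv 0$ a.\,e.\,on $\Gamma_R$, hence
\begin{align*}
\int_{\Gamma}\mathcal{C}(x,u)\,v\eta^{p^+}\diff\sigma = 0.
\end{align*}
Consequently, testing \eqref{def_sol_N} with $\varphi$ and using the product rule $\nabla\varphi=\eta^{p^+}\nabla v + p^+ v\eta^{p^+-1}\nabla\eta$ reduces to exactly the identity \eqref{P.Le2.test} from the previous lemma.

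From this point on, the proof is identical to that of Lemma \ref{H.Cac.D}: hypothesis (A2) gives the lower bound \eqref{test.Est1}, hypothesis (A1) combined with Young's inequality (with the weights $(a_4/(3a_1p^+))^{\pm(p(x)-1)/p(x)}$) yields \eqref{test.Est2}, and hypothesis (B) together with Young's inequality (using $v\leq 2M_0$ on $A(\kappa,R)$ and the additional assumption $\kappa\geq \esssup_{\Omega_R}u-2M_0$, which controls $|u|$ and $v$ by $M_0$ on $A(\kappa,R)$) gives \eqref{test.Est3}. Collecting these three estimates and absorbing the two $\frac{a_4}{3}\int|\nabla u|^{p(x)}\eta^{p^+}\diff x$ terms into the left-hand side yields
\begin{align*}
\int_{A(\kappa,R)}|\nabla u|^{p(x)}\eta^{p^+}\diff x
\leq C\int_{A(\kappa,R)}|\nabla\eta|^{p(x)}v^{p(x)}\diff x + C|A(\kappa,R)|,
\end{align*}
and then \eqref{Le.4.4.Ineq} follows by restricting the left-hand side to $B_r\subset\{\eta=1\}$ and using $|\nabla\eta|\leq 4/(R-r)$. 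The case of $-u$ is handled by the same argument applied to the equation satisfied by $-u$, whose structural hypotheses H(N) are invariant under $s\mapsto -s$.

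The only point requiring care — and the sole obstacle separating this lemma from Lemma \ref{H.Cac.D} — is the disappearance of the boundary integral. This is where the extra condition $\kappa\geq \esssup_{\Gamma_R}u$ enters; without it, the term involving $\mathcal{C}$ would need to be estimated by hypothesis (C) and controlled by a boundary trace inequality, which would force a different iteration scheme. With this condition in place, the full proof is a routine transcription of the interior case, which is why the statement is presented without a separate detailed proof.
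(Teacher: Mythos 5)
Your proof is correct, and it takes a genuinely different---and shorter---route than the paper's. You observe that the hypothesis $\kappa \geq \esssup_{\Gamma_R} u$ forces $v=(u-\kappa)_+=0$ $\sigma$-a.e.\ on $\Gamma_R$, and since $\supp\eta\subset B_R$ the trace of $v\eta^{p^+}$ vanishes on all of $\Gamma$, so the boundary integral drops out of the weak formulation outright; thereafter the argument is word for word the proof of Lemma~\ref{H.Cac.D}. The paper instead keeps the boundary term, bounds it by $C\int_{\Gamma_R} v\eta^{p^+}\diff\sigma$ using hypothesis (C) and the $L^\infty$ bound on $u$, and then invokes a trace inequality $\int_{\partial\Omega_R}|w|\,\diff\sigma\leq C\bigl(\int_{\Omega_R}|w|\diff x+\int_{\Omega_R}|\nabla w|\diff x\bigr)$ together with Young's inequality to absorb it into the interior terms, without ever exploiting the condition $\kappa\geq\esssup_{\Gamma_R}u$ (see \eqref{estimate-for-C} and \eqref{estimate-for-C1}). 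Both approaches yield \eqref{Le.4.4.Ineq}; yours makes transparent why the extra hypothesis on $\kappa$ is imposed, while the paper's shows the Caccioppoli estimate holds even if one only assumes $\kappa\geq\esssup_{\Omega_R}u-2M_0$ (the boundary condition on $\kappa$ then being needed only for the subsequent application of Fan--Zhao's Theorem~2.2). Two small inaccuracies in your closing remarks: the paper \emph{does} give a separate detailed proof of this lemma, and the alternative trace-inequality estimation does \emph{not} ``force a different iteration scheme''---it delivers exactly the same Caccioppoli inequality.
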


\begin{proof}
	Let $0<r<R$ with $B_R\cap \Gamma\ne\emptyset$ and let $\kappa\in\R$ be such that
	\begin{align*}
		\kappa \geq \max\big\{\underset{\Omega_R}{\esssup}\,u-2M_0, \underset{\Gamma_R}{\esssup}\, u\big\}.
	\end{align*}
	 Let $\eta$ and $v$ be as in the proof of Lemma~\ref{H.Cac.D}. Then, testing \eqref{N} with $\varphi=v\eta^{p^+}$, we get
	\begin{align}\label{P.Le4.4.test}
		\begin{split}
			&\int_\Omega \left(\mathcal{A}\left(x,u,\nabla u\right)\cdot\nabla v\right)\eta^{p^+}\diff x\\
			&= -p^+\int_\Omega \left(\mathcal{A}\left(x,u,\nabla u\right)\cdot\nabla \eta \right)v\eta^{p^+-1}\diff x \\
			& \quad+ \int_\Omega \mathcal{B}\left(x,u,\nabla u\right)v\eta^{p^+}\diff x+\int_{\Gamma}\mathcal{C}(x,u)v\eta^{p^+-1} \diff\sigma.
		\end{split}
	\end{align}
	From hypothesis (C) and Sobolev's imbedding we have
	\begin{align}\label{estimate-for-C}
		\begin{split}
			&\int_{\Gamma}\mathcal{C}(x,u)v\eta^{p^+-1} \diff\sigma\\
			&\leq c_1 \int_{\Gamma_R}|u|^{p_*(x)-1}v\eta^{p^+} \diff\sigma+c_2 \int_{\Gamma_R}v\eta^{p^+} \diff\sigma\\
			&\leq C \int_{\Gamma_R}v\eta^{p^+} \diff\sigma\\
			&\leq C \int_{\partial\Omega_R}v\eta^{p^+} \diff\sigma\\
			&\leq C\left[\int_{\Omega_R}v\eta^{p^+} \diff x+ \int_{\Omega_R}\left|\nabla\left(v\eta^{p^+}\right)\right| \diff x\right].
		\end{split}
	\end{align}
	From \eqref{estimate-for-C}, the fact that $0\leq v\leq 2M_0$ on $\Omega_R$ and applying Young's inequality we deduce
	\begin{align}\label{estimate-for-C1}
		\begin{split}
			&\int_{\Gamma}\mathcal{C}(x,u)v\eta^{p^+-1} \diff\sigma\\
			&\leq \frac{a_4}{6}\int_{A\left(\kappa,R\right)}\left|\nabla u\right|^{p(x)}\eta^{p^+}\diff x+C\int_{A\left(\kappa,R\right)}\left|\nabla \eta\right|^{p(x)}v^{p(x)}\diff x\\
			&\quad +C\left|A\left(\kappa,R\right)\right|,
		\end{split}
	\end{align}
	see \eqref{P.Le4.3.eta} and \eqref{test.Est2''''}. Combining \eqref{test.Est1}, \eqref{test.Est2}, \eqref{test.Est3}, \eqref{P.Le4.4.test} and \eqref{estimate-for-C1} gives the desired estimate in \eqref{Le.4.4.Ineq}. The proof is complete.
\end{proof}

We are now in a position to state the proof of Theorem \ref{H.N}.

\begin{proof}[Proof of Theorem \ref{H.N}]
	By Theorem \ref{Theo.N} we have that $L^\infty (\Omega)\cap L^\infty (\Gamma)$. Then by Lemma \ref{H.Cac.D} and Theorem 2.1 of Fan-Zhao \cite{Fan-Zhao-1999} we obtain $u\in C^{0,\alpha}(\Omega)$ for some $\alpha\in (0,1)$. If in addition $u\in C^{0,\beta_1}(\Gamma)$ for some $\beta_1 \in (0,1)$, then by Lemma \ref{H.Cac.N} and Theorem 2.2 of Fan-Zhao \cite{Fan-Zhao-1999} we infer that  $u\in C^{0,\beta_2}(\overline{\Omega})$ for some $\beta_2 \in (0,1)$.
\end{proof}

Finally, let us discuss the $C^{1,\alpha}$-regularity of solutions to problems \eqref{D} and \eqref{N} when the function $\mathcal{A}\colon \Omega \times \R\times \R^N\to\R^N$ satisfies further assumptions.

To this end, we suppose the following.
\begin{enumerate}
	\item [H(p3):]
		$p\in C_+(\close)\cap C^{0,\mu}(\close)$ for some $\mu \in (0,1)$;
	\item [H(A):]
		$A=(A_1,\cdots,A_N)\in C(\close\times\R\times \R^N,\R^N)$. For each $(x,s)\in \close\times\R,$ $A(x,s,\cdot)\in C^1(\R^N\setminus\{0\},\R^N)$ and there exist a nonnegative constant $k\geq 0$, a nonincreasing continuous function $\lambda \colon [0,\infty)\to (0,\infty)$ and a nondecreasing continuous function $\Lambda\colon [0,\infty)\to (0,\infty)$ such that for all $x,x_1,x_2\in \overline{\Omega}$, $s,s_1,s_2\in\R$, $\xi\in \R^N\setminus\{0\}$ and $\zeta=(\zeta_1,\cdots,\zeta_N)\in \R^N$,
		the subsequent following conditions are satisfied:
		\begin{align*}
			A(x,s,0)&=0,\\
			\sum_{i,j}\frac{\partial A_j}{\partial \xi_i}(x,s,\xi)\zeta_i\zeta_j&\geq \lambda(|s|)(k +|\xi|^2)^{\frac{p(x)-2}{2}}|\zeta|^2,\\
			\sum_{i,j}\left|\frac{\partial A_j}{\partial\xi_i}(x,s,\xi)\right|&\leq \Lambda (|s|)(k +|\xi|^2)^{\frac{p(x)-2}{2}},
		\end{align*}
		and
		\begin{align*}
			& |A(x,s_1,\xi)-A(x,s_2,\xi)|\\
			&\leq \Lambda\left(\max\{|s_1|,|s_2|\}\right)\left(|x_1-x_2|^{\mu_1}+|s_1-s_2|^{\mu_2}\right)\\
			&\quad\times \left[\left(k+|\xi|^2\right)^{\frac{p(x_1)-2}{2}}+\left(k+|\xi|^2\right)^{\frac{p(x_2)-2}{2}}\right]|\xi|\left(1+\left|\log\left(k+|\xi|^2\right)\right|\right).	
	\end{align*}
\end{enumerate}

Then, in view of Theorems \ref{Theo.D}, \ref{Theo.N} and Theorems \ref{H.D}, \ref{H.N} above along with Theorems 1.1-1.3 of Fan \cite{Fan-2007} we have the following results.

\begin{theorem}\label{C1,alpha.D}
	Let hypotheses H(D), H(p3) and H(A) be satisfied. Then, any weak solution $u$ of problem~\eqref{D} is of class $C_{\loc}^{1,\alpha}(\Omega)$ for some $\alpha \in (0,1)$. Furthermore, if in addition $\Gamma$ is of class $C^{1,\beta}$, then $u\in C^{1,\alpha}(\close)$ for some $\alpha \in (0,1)$.
\end{theorem}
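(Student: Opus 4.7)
The plan is to reduce to Fan's $C^{1,\alpha}$-regularity theorems (Theorems 1.1--1.3 of \cite{Fan-2007}) by first using the a priori results from the preceding sections to freeze the dependence of $\mathcal{A}$ on $u$, and then verifying that the frozen operator $\xi\mapsto \mathcal{A}(x,u(x),\xi)$ together with the perturbation $\mathcal{B}(x,u,\nabla u)$ satisfies exactly Fan's structural hypotheses.

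First I would invoke Theorem \ref{Theo.D} to obtain $M:=\|u\|_\infty<\infty$, and then Theorem \ref{H.D} to obtain $u\in C^{0,\alpha_0}(\close)$ for some $\alpha_0\in (0,1]$. With these two pieces in hand, the structural assumptions H(A) evaluated along $s=u(x)$ become
\begin{align*}
\sum_{i,j}\frac{\partial A_j}{\partial\xi_i}(x,u(x),\xi)\zeta_i\zeta_j &\ge \lambda(M)(k+|\xi|^2)^{\frac{p(x)-2}{2}}|\zeta|^2,\\
\sum_{i,j}\left|\frac{\partial A_j}{\partial\xi_i}(x,u(x),\xi)\right|&\le \Lambda(M)(k+|\xi|^2)^{\frac{p(x)-2}{2}},
\end{align*}
with constants depending only on $M$ and on the data. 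Together with H(p3) and the third inequality in H(A), the Hölder continuity of $u$ transfers into Hölder dependence of $\mathcal{A}(\cdot,u(\cdot),\xi)$ on $x$, with a Hölder exponent $\min\{\mu,\mu_1,\alpha_0\mu_2\}$. This matches precisely the ellipticity and regularity framework used in \cite{Fan-2007}.

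Next I would handle the right-hand side. Using hypothesis (B) and $|u|\le M$, we obtain
\begin{align*}
|\mathcal{B}(x,u,\nabla u)|\le b_1|\nabla u|^{p(x)\frac{p^*(x)-1}{p^*(x)}}+C(M),
\end{align*}
and since $p(x)(p^*(x)-1)/p^*(x)<p(x)$ uniformly on $\close$, the dependence on $|\nabla u|$ is strictly subcritical. Consequently, $u$ is a bounded weak solution of a frozen $p(x)$-Laplacian-type equation whose data fit exactly the hypotheses of Theorem 1.1 of \cite{Fan-2007}, and the interior conclusion $u\in C^{1,\alpha}_{\loc}(\Omega)$ follows.

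For the global statement, when $\Gamma$ is of class $C^{1,\beta}$ I would apply Theorem 1.2 of \cite{Fan-2007}. Since $u=0$ on $\Gamma$, the boundary datum is trivially $C^{1,\beta}$, and the verification of the structural hypotheses carried out above remains valid up to $\Gamma$ thanks to $u\in C^{0,\alpha_0}(\close)$; this gives $u\in C^{1,\alpha}(\close)$. The main obstacle is not conceptual but a bookkeeping check that the various Hölder and growth exponents produced by H(A), H(p3), Theorem \ref{Theo.D} and Theorem \ref{H.D} line up with the precise formulation in Fan's theorems; once this compatibility is verified, Theorem \ref{C1,alpha.D} follows by direct invocation.
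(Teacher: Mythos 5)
Your proposal matches the paper's treatment: the authors also derive Theorem~\ref{C1,alpha.D} as a direct consequence of Theorem~\ref{Theo.D} (boundedness), Theorem~\ref{H.D} (H\"older continuity), and Theorems 1.1--1.3 of Fan \cite{Fan-2007}, without writing out further detail. Your elaboration on how the frozen operator and the subcritical growth of $\mathcal{B}$ in $|\nabla u|$ fit Fan's hypotheses is a correct and useful unpacking of the same argument.
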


\begin{theorem}\label{C1,alpha.N}
	Assume that $\Gamma$ is of class $C^{1,\beta}$ and let the hypotheses H(N) and H(A) be satisfied. Further, let  $p\in C_+(\close)\cap W^{1,\gamma}(\Omega)$ for some $\gamma>N$ and let $\mathcal{C}\in C(\Gamma\times\R,\R)$ be such that
	\begin{equation*}
		|\mathcal{C}(x_1,s_1)-\mathcal{C}(x_2,s_2)\leq \Lambda (\max\{|s_1|,|s_2|\})\left(|x_1-x_2|^{\delta_1}+|s_1-s_2|^{\delta_2}\right),
	\end{equation*}
	 for all $x_1,x_2\in\Gamma$, for all $s_1,s_2\in\R$, where $\Lambda$ is as in H(A). Then, any weak solution $u$ of problem \eqref{N} belongs to $C^{1,\alpha}(\close)$ for some $\alpha \in (0,1)$.
\end{theorem}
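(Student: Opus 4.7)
The plan is to verify the hypotheses of the boundary $C^{1,\alpha}$ regularity theorem of Fan \cite{Fan-2007} (Theorem 1.3 there) by assembling ingredients already available in the preceding sections. The overall structure is: first obtain global boundedness, then promote this to H\"older continuity up to the boundary, finally invoke Fan's regularity result as a black box.

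First I would apply Theorem \ref{Theo.N} to deduce that $u\in L^\infty(\Omega)\cap L^\infty(\Gamma)$. This requires checking that H(N) is in force: the structural bounds (A1) and (A2) on $\mathcal{A}$ follow from the ellipticity and gradient bounds in H(A) by integrating along the segment $t\mapsto \mathcal{A}(x,s,t\xi)$ and using $\mathcal{A}(x,s,0)=0$, while (B) is trivial since no $\mathcal{B}$-term appears (it can be taken as $0$). For (C), the assumed H\"older bound on $\mathcal{C}$ together with the $L^\infty$ bound we are about to deduce makes $\mathcal{C}(\cdot,u)$ a bounded function on $\Gamma$; thus (C) holds on the relevant range of $u$. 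Finally H(p2) is part of the hypothesis.

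Next, the aim is to obtain H\"older regularity up to $\Gamma$. The first assertion of Theorem \ref{H.N} immediately yields $u\in C^{0,\alpha}(\Omega)$ for some $\alpha\in(0,1)$. To upgrade to $u\in C^{0,\beta_1}(\Gamma)$, I would combine the boundary Caccioppoli estimate of Lemma \ref{H.Cac.N} with a De Giorgi-type oscillation argument in the spirit of Theorem 2.2 of Fan-Zhao \cite{Fan-Zhao-1999}. The boundary integral is now harmless because $u$ is known to be bounded on $\Gamma$, so $\mathcal{C}(\cdot,u)$ is a bounded perturbation and the critical growth in (C) has been neutralized by the $L^\infty$-bound. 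Feeding $u\in C^{0,\beta_1}(\Gamma)$ into the second assertion of Theorem \ref{H.N} then gives $u\in C^{0,\beta_2}(\close)$.

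Finally, from $p\in W^{1,\gamma}(\Omega)$ with $\gamma>N$ and the Morrey-type chain \eqref{inclusions} one obtains $p\in C^{0,1-N/\gamma}(\close)$, so hypothesis H(p3) is satisfied with $\mu=1-N/\gamma$. Together with H(A), the H\"older continuity of $\mathcal{C}$ postulated in the theorem, the smoothness $\Gamma\in C^{1,\beta}$, the $L^\infty$ bound, and the global H\"older continuity $u\in C^{0,\beta_2}(\close)$, every hypothesis of Fan \cite{Fan-2007} Theorem 1.3 is in place, and applying that theorem produces $u\in C^{1,\alpha}(\close)$ for some $\alpha\in(0,1)$. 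The main obstacle I anticipate is the second step: one must carefully verify that the boundary Caccioppoli inequality from Lemma \ref{H.Cac.N}, combined with the variable-exponent adaptation of Fan-Zhao's oscillation lemma, actually delivers a genuine H\"older modulus on $\Gamma$ in the present critical setting, and the crucial observation making this possible is that $\mathcal{C}(\cdot,u)$ degenerates to a bounded lower-order term once the boundedness from Theorem \ref{Theo.N} is available, reducing the problem to the subcritical framework already treated in \cite{Fan-Zhao-1999}.
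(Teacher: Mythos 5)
Your proposal contains a genuine gap and also some confusion about what is hypothesis and what is to be proved.

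First, the bookkeeping confusion: you spend a paragraph arguing that the structural conditions (A1), (A2), (B), (C) of H(N) can be ``derived'' from H(A) and an $L^\infty$-bound. But the theorem \emph{assumes} H(N) directly, so these conditions are given, not to be derived. Worse, your verification of (C) uses ``the $L^\infty$ bound we are about to deduce,'' but that $L^\infty$-bound comes from Theorem~\ref{Theo.N}, whose application requires H(N) in the first place -- so that reasoning is circular. Since H(N) is a stated hypothesis, none of this matters for the conclusion, but it indicates a misreading of the theorem.

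The real gap is Step 4. You propose to establish $u\in C^{0,\beta_1}(\Gamma)$ by ``combining Lemma~\ref{H.Cac.N} with a De Giorgi-type oscillation argument,'' arguing that the $L^\infty$-bound neutralizes the critical growth of $\mathcal{C}$. But this cannot work with Lemma~\ref{H.Cac.N} as stated: that lemma is only valid for levels $\kappa\geq\max\bigl\{\esssup_{\Omega_R}u-2M_0,\ \esssup_{\Gamma_R}u\bigr\}$, i.e.\ one is forced to choose $\kappa$ at or above the essential supremum of $u$ on $\Gamma_R$, so that $(u-\kappa)_+$ vanishes on the boundary piece. This restriction is exactly what kills the boundary integral, and it is also exactly why the lemma gives no control whatsoever on the oscillation of the trace of $u$ on $\Gamma$. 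Boundedness of $u$ on $\Gamma$ is a far weaker piece of information than H\"older continuity of the trace, and no iteration based solely on Lemma~\ref{H.Cac.N} can bridge that gap. This is precisely the reason Theorem~\ref{H.N} carries $u\in C^{0,\beta_1}(\Gamma)$ as a separate \emph{hypothesis} rather than a conclusion, and your plan to prove it ``in the spirit of Theorem 2.2 of Fan--Zhao'' is exactly the missing content, which you leave unexecuted.

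Fortunately, the detour you take through Theorem~\ref{H.N} is not what the paper intends. Fan's boundary regularity theorem for the conormal problem (Theorem 1.3 of \cite{Fan-2007}) requires only that $u\in W^{1,p(\cdot)}(\Omega)\cap L^\infty(\Omega)$, together with $\Gamma\in C^{1,\beta}$, the log/H\"older regularity of $p$, the structural conditions H(A) on $\mathcal{A}$, and the H\"older-type continuity of $\mathcal{C}$ assumed in the statement; it establishes the interior and boundary $C^{0,\alpha}$- and $C^{1,\alpha}$-estimates internally. The intended argument is therefore short: Theorem~\ref{Theo.N} gives $u\in L^\infty(\Omega)\cap L^\infty(\Gamma)$; the chain of inclusions \eqref{inclusions} gives $p\in C^{0,1-N/\gamma}(\close)$, so H(p3) holds; and then Fan's Theorem 1.3 applies directly to yield $u\in C^{1,\alpha}(\close)$. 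Your final step (invoking Fan) is correct, and your observation about H(p3) via Morrey embedding is correct, but the middle of your argument -- manufacturing $u\in C^{0,\beta_1}(\Gamma)$ out of Lemma~\ref{H.Cac.N} -- is both unnecessary and, as written, unjustified.
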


\begin{remark}
	Note that condition H(p3) is automatically satisfied in Theorem \ref{C1,alpha.N} due to $p\in C_+(\close)\cap W^{1,\gamma}(\Omega)$ for some $\gamma>N$ and \eqref{inclusions}.
\end{remark}
\section*{Acknowledgment}

K. Ho was supported by University of Economics Ho Chi Minh City, Vietnam. Y.-H. Kim was supported by the National Research Foundation of Korea (NRF) grant funded by the Korea government (MSIT) (NRF-2019R1F1A1057775). C. Zhang was supported by the National Natural Science Foundation of China (No. 12071098).


\end{document}